\pgfplotsset{compat=1.16}
\numberwithin{equation}{section}
\theoremstyle{plain}
\newtheorem{theorem}{Theorem}[section]
\newtheorem{corollary}{Corollary}[section]
\newtheorem{proposition}{Proposition}[section]
\newtheorem{lemma}{Lemma}[section]
\theoremstyle{definition}
\newtheorem{definition}{Definition}[section]
\newtheorem{algorithm}{Algorithm}[section]
\theoremstyle{remark}
\newtheorem{remark}{Remark}[section]
\begin{document}
\title{\textbf{Coskewness under dependence uncertainty}
} 
\date{\today}
\author{\textit{Carole Bernard\thanks{Carole Bernard, Department of Accounting, Law and Finance, Grenoble Ecole de Management (GEM) and Department of Business at Vrije Universiteit Brussel (VUB). (email: \texttt{carole.bernard@grenoble-em.com}).}, Jinghui Chen\thanks{Corresponding author: Jinghui Chen, Department of Business at Vrije Universiteit Brussel (VUB). (email: \texttt{jinghui.chen@vub.be}).}, Ludger R\"{u}schendorf\thanks{Ludger R\"{u}schendorf, Department of Stochastic at University of Freiburg. (email: \texttt{ruschen@stochastik.uni-freiburg.de}).} and Steven Vanduffel\thanks{Steven Vanduffel, Department of Business at Vrije Universiteit Brussel (VUB). (email: \texttt{steven.vanduffel@vub.be}).}}}
\maketitle
\begin{onehalfspacing}
\begin{abstract}
	We study the impact of dependence uncertainty on 
	$\mathbb{E}(X_1X_2\cdots X_d)$ when $X_i \sim F_i$ for all~$i$. 
	Under some conditions on the $F_i$, explicit sharp bounds are obtained and  
	a numerical method is provided to approximate them for arbitrary choices of  the $F_i$. 
	The results are applied to assess the impact of dependence uncertainty on coskewness. In this regard, we introduce a novel notion of ``standardized rank coskewness," which is invariant under strictly increasing transformations and takes values in $[-1,\ 1]$.
\end{abstract}

\textbf{Keywords:} Expected product, Higher-order moments, Copula, Coskewness, Risk bounds.
\end{onehalfspacing}

\thispagestyle{empty}
\newpage
 \pagenumbering{arabic}
\section{Introduction}
A fundamental characteristic of a multivariate random vector $\left(X_1,X_2,\dots, X_d\right)$ concerns the k-th order mixed moment
\begin{equation*}
	\mathbb{E}\left(X_1^{k_1}X_2^{k_2}\cdots X_d^{k_d}\right)=\int_{\mathbb{R}^d}x_1^{k_1}x_2^{k_2}\cdots x_d^{k_d}dF(x_1, x_2, \dots, x_d),
\end{equation*}
where $k_i,$ $i=1,2,\dots,d,$ are non-negative integers such that $\sum_{i=1}^{d}k_i=k$ and $F$ is the joint distribution function of $\left(X_1,X_2,\dots, X_d\right)$; see e.g., \cite{kotz2004continuous}. A classic problem in multivariate modeling is to find sharp bounds on mixed moments 
under the assumption that the marginal distribution functions of the $X_i$ are known but not their dependence. The solutions in case $d=2$ are well-known but for higher dimensions a complete solution is still missing. In this regard, it is well-known  that under the assumption that all $X_i$ are non-negative, the sharp upper bound is obtained in case the variables have a comonotonic dependence. As for the lower bound problem, \cite{wang2011complete} obtain a sharp bound under the assumption that the $X_i$ are standard uniformly distributed. To the best of our knowledge, there are no other relevant results available in the literature. 

In the first part of this paper, we determine for the case $k_i=1$ $(i=1,2,\dots,d)$ sharp lower and upper bounds on mixed moments under some assumptions on the marginal distribution functions of $X_i$. When $k_i\neq 1$ and the domain of marginal distributions is non-negative, these bounds are also solvable under a mixing assumption on the distributions of $k_i\ln X_i$. Furthermore, we establish a necessary condition that solutions to the optimization problems need to satisfy and use this result to design an algorithm that approximates the sharp bounds.   

A special case of finding bounds on mixed moments concerns the case of standardized central mixed moments, such as covariance (second-order), coskewness (third-order) and cokurtosis (fourth-order). The sharp lower and upper bounds in the case of covariance are very well-known in the literature, and in the second part of the paper we focus on the application of our results to obtaining bounds on coskewness. We obtain explicit risk bounds for some popular families of marginal distributions, such as uniform, normal and Student's t distributions. 
Furthermore, we introduce the novel notion of  standardized rank coskewness and discuss its properties. Specifically, as the standardized rank coskewness takes values in $[-1,\ 1]$ and is not affected by the choice of marginal distributions, this notion makes it possible to interpret the sign and magnitude of coskewness without impact of marginal distributions. In spirit, the standardized rank coskewness extends the notion of Spearman's correlation coefficient to three dimensions. 

The paper is organized as follows. In Section \ref{problem setting}, we lay out the optimization problem. 
In Section~\ref{main theorem}, we derive  sharp bounds under various conditions on the marginal distribution functions and also provide a numerical approach to approximate the sharp bounds in general.  
We apply our results in Section \ref{uniform margins} to  introduce the notion of standardized rank coskewness. 

\section{Problem setting} \label{problem setting} 
In what follows all random variables $X_i\sim F_i$, $i=1,2,\dots,d$, that we consider are assumed to be square integrable. We denote their means and standard deviations by $\mu_i$ and $\sigma_i$, respectively. Furthermore, $U$ always denotes a standard uniform distributed random variable. The central question of this paper is to derive lower and upper bounds on the expectation of the product of $d\geq2$ random variables under dependence uncertainty. Specifically, we consider the problems
\begin{align}
	m&=\inf \limits_{\forall i\ X_i\sim F_i}\mathbb{E}\left(X_1X_2\cdots X_d\right); \label{m} \\
	M&=\sup \limits_{\forall i\ X_i\sim F_i}\mathbb{E}\left(X_1X_2\cdots X_d\right).\label{M}
\end{align}
When $d=2$, it is well-known that $M$ is given by  $\mathbb{E}(F_1^{-1}(U)F_2^{-1}(U))$ (comonotonicity), and $m$ is given by $\mathbb{E}(F_1^{-1}(U)F_2^{-1}(1-U))$ (antimonotonicity). 
 For general $d$, 
the lower bound problem has a long history when $X_i \sim U[0,1]$,  \citep[see e.g.,][]{ruschendorf1980inequalities, bertino1994minimum, nelsen2012directional}. Specifically, \cite{wang2011complete} found the following closed-form expression (Corollary 4.1) for $m$ in the case of $X_i\sim U[0,1]$:
\begin{equation}\label{min of unif mix}
		m=\frac{1}{(d-1)^2}\left(\frac{1}{d+1}-(1-(d-1)c_d)^d+\frac{d}{d+1}(1-(d-1)c_d)^{d+1}\right)+(1-dc_d)c_d(1-(d-1)c_d)^{d-1},
\end{equation} 
where $c_d$ is the unique solution to $\log\left(1-d+\frac{1}{c}\right)=d-d^2c.$ Clearly, for $M$ we find in this case that $M=\mathbb{E}(U^d)=\frac{1}{d+1}$. When $X_i\sim U[a,b]$ such that $0<a<b<\infty$ and the inequality $\exp\{\frac{d}{b-a}(b(\ln b-1)-a(\ln a -1))\}-ab^{d-1}\leq 0$ holds, \cite{bignozzi2015studying} found the analytic result $m=\left(\frac{b^be^a}{a^ae^b}\right)^{\frac{d}{b-a}}$.

However, as far as we know, there are no other results available in the literature for computing $m$ and $M$ in more general cases. In the following section, we contribute to the literature by solving explicitly Problems \eqref{m} and \eqref{M} under various  assumptions on the marginal distributions $F_i$, $i=1,2,\dots,d$ (Section 3.1), or via an algorithm for arbitrary choices of $F_i$ (Section 3.2). 

\section{Lower and upper bounds} \label{main theorem}

\subsection{Analytic results}
We first provide lower and upper bounds when the $F_i$ are symmetric and have zero means. Next we study the case in which the $F_i$ satisfy some domain constraints or when they are uniform distributions on $(a,b)$, $a<0<b$.  
Our results make use of the following two lemmas. 
\begin{lemma}[Maximum product] \label{max product}
	Let $X_i\sim F_i$, denote by $G_i$ the df of the absolute value of $X_i$ $($i.e., $\lvert X_i\rvert\sim G_i$$)$, $i=1,2,\dots,d,$ and $U\sim U[0,1]$. Then,
 \begin{equation}\label{max ineq}
			M\leq\mathbb{E}\left(\prod_{i=1}^{d}G_i^{-1}(U)\right).
		\end{equation}
 If $\lvert X_1\rvert$, $\lvert X_2\rvert$, $\dots$, $\lvert X_d\rvert$ are comonotonic and $\prod_{i=1}^{d}X_i\geq 0$ a.s., then $(X_1, X_2,\dots, X_d)$ attains the maximum value $M$ and equality holds in \eqref{max ineq}.
\end{lemma}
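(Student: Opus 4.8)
The plan is to establish the inequality \eqref{max ineq} first, and then verify the attainment condition separately.

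The plan is to establish the inequality \eqref{max ineq} first and then verify the attainment condition separately. For the inequality, I would start from the pointwise bound $\prod_{i=1}^d X_i \leq \bigl\lvert \prod_{i=1}^d X_i \bigr\rvert = \prod_{i=1}^d \lvert X_i\rvert$, which holds for every coupling with $X_i \sim F_i$. Taking expectations gives $\mathbb{E}(\prod_i X_i) \leq \mathbb{E}(\prod_i \lvert X_i\rvert)$, so it suffices to bound the right-hand side, where now the $\lvert X_i\rvert \sim G_i$ are non-negative. The central step is to argue that, over all couplings of non-negative random variables with fixed marginals $G_i$, the expected product $\mathbb{E}(\prod_i \lvert X_i\rvert)$ is maximized by the comonotonic coupling $(\lvert X_1\rvert, \ldots, \lvert X_d\rvert) \stackrel{d}{=} (G_1^{-1}(U), \ldots, G_d^{-1}(U))$, whose value is exactly $\mathbb{E}(\prod_i G_i^{-1}(U))$. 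Since the resulting bound does not depend on the chosen coupling, taking the supremum over all admissible couplings yields $M \leq \mathbb{E}(\prod_i G_i^{-1}(U))$.

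The justification of the comonotonic maximization is where the real work lies, and it is the step I expect to be the main obstacle. The function $\phi(y_1,\ldots,y_d) = \prod_{i=1}^d y_i$ is supermodular on $[0,\infty)^d$, since its mixed second differences are non-negative (formally $\partial^2\phi/\partial y_i\,\partial y_j = \prod_{k\neq i,j} y_k \geq 0$). By the theory of the supermodular ordering — the comonotonic vector dominates every coupling with the same marginals in the supermodular order (Tchen, R\"{u}schendorf) — the expectation $\mathbb{E}(\phi)$ is maximized under comonotonicity. I would invoke this classical result rather than reprove it; alternatively, for a self-contained argument, one can proceed by induction on $d$, applying the bivariate Hoeffding--Fr\'{e}chet rearrangement bound at each step, which requires some care regarding integrability of the partial products (here the square-integrability of the $X_i$ should be exploited, or integrability assumed as needed).

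For the attainment claim, suppose $\lvert X_1\rvert,\ldots,\lvert X_d\rvert$ are comonotonic and $\prod_i X_i \geq 0$ almost surely. Comonotonicity gives $(\lvert X_1\rvert,\ldots,\lvert X_d\rvert) \stackrel{d}{=} (G_1^{-1}(U),\ldots,G_d^{-1}(U))$, hence $\mathbb{E}(\prod_i \lvert X_i\rvert) = \mathbb{E}(\prod_i G_i^{-1}(U))$. The sign condition $\prod_i X_i \geq 0$ forces $\prod_i X_i = \prod_i \lvert X_i\rvert$ a.s., so $\mathbb{E}(\prod_i X_i) = \mathbb{E}(\prod_i G_i^{-1}(U))$. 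This value matches the upper bound in \eqref{max ineq}, so the coupling $(X_1,\ldots,X_d)$ attains $M$ and equality holds, completing the argument.
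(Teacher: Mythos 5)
Your proposal is correct and follows essentially the same route as the paper: bound $\mathbb{E}(\prod_i X_i)$ by $\mathbb{E}(\prod_i \lvert X_i\rvert)$, invoke maximality of the comonotonic coupling for non-negative variables (which the paper simply cites as well known, while you justify it via supermodularity and the Tchen--R\"{u}schendorf ordering), and then check attainment exactly as the paper does. No gaps; your extra detail on the supermodular-order step is a legitimate filling-in of the paper's appeal to a classical fact.
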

\begin{proof}
As for the first part of the lemma, note that for any random vector $(Y_1, Y_2,\dots, Y_d)$ such that $Y_i\sim F_i,$ $i=1,2,\dots,d,$ it holds that $\mathbb{E}(\prod_{i=1}^{d}Y_i)\leq \mathbb{E}(\prod_{i=1}^{d}\lvert Y_i\rvert).$  \eqref{max ineq} then follows from the well-known fact that the right hand side of this inequality is maximized under a comonotonic dependence among the $\abs{Y_i}$. As for the second part, since $\lvert X_i\rvert\overset{d}{=}\lvert Y_i\rvert,$ the  $\abs{X_i}$ are comonotonic, and $\prod_{i=1}^{d}X_i\geq 0,$ it follows that $\mathbb{E}(\prod_{i=1}^{d}Y_i)\leq \mathbb{E}(\prod_{i=1}^{d}\lvert X_i\rvert)=\mathbb{E}(\prod_{i=1}^{d}X_i).$ 
\end{proof}
\begin{lemma}[Minimum product] \label{min product}
	Let $X_i\sim F_i$, denote by $G_i$ the df of the absolute value of $X_i$ $($i.e., $\lvert X_i\rvert\sim G_i$$)$, $i=1,2,\dots,d,$ and $U\sim U[0,1]$. Then, \begin{equation}\label{min ineq}
			m\geq-\mathbb{E}\left(\prod_{i=1}^{d}G_i^{-1}(U)\right).
		\end{equation} 
 If $\lvert X_1\rvert$, $\lvert X_2\rvert$, $\dots$, $\lvert X_d\rvert$ are comonotonic and $\prod_{i=1}^{d}X_i\leq 0$ a.s., then $(X_1, X_2,\dots, X_d)$ attains the minimum value $m$ and equality holds in \eqref{min ineq}.
\end{lemma}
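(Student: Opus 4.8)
The plan is to mirror the argument of Lemma~\ref{max product}, exchanging the roles of the upper and lower bounds by carefully tracking a single sign change. First I would fix an arbitrary admissible coupling $(Y_1,\dots,Y_d)$ with $Y_i\sim F_i$ and record the pointwise inequality $\prod_{i=1}^d Y_i \geq -\lvert \prod_{i=1}^d Y_i\rvert = -\prod_{i=1}^d \lvert Y_i\rvert$, which upon taking expectations yields $\mathbb{E}(\prod_{i=1}^d Y_i)\geq -\mathbb{E}(\prod_{i=1}^d \lvert Y_i\rvert)$.

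The second step is the decisive one. Since each $\lvert Y_i\rvert$ is nonnegative with $\lvert Y_i\rvert\sim G_i$, the expected product $\mathbb{E}(\prod_{i=1}^d \lvert Y_i\rvert)$ is maximized over all such couplings under a comonotonic dependence of the $\lvert Y_i\rvert$, in which case it equals $\mathbb{E}(\prod_{i=1}^d G_i^{-1}(U))$; this is exactly the comonotonicity fact already invoked in Lemma~\ref{max product}. Hence $\mathbb{E}(\prod_{i=1}^d \lvert Y_i\rvert)\leq \mathbb{E}(\prod_{i=1}^d G_i^{-1}(U))$, and multiplying by $-1$ reverses the inequality to $-\mathbb{E}(\prod_{i=1}^d\lvert Y_i\rvert)\geq -\mathbb{E}(\prod_{i=1}^d G_i^{-1}(U))$. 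Chaining this with the pointwise bound from the first step and taking the infimum over all admissible couplings delivers \eqref{min ineq}.

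For the attainment claim I would verify that under the two stated hypotheses both inequalities collapse to equalities. If $\prod_{i=1}^d X_i\leq 0$ a.s., then $\prod_{i=1}^d X_i = -\prod_{i=1}^d\lvert X_i\rvert$ a.s., so the first inequality is tight; and if the $\lvert X_i\rvert$ are comonotonic, then $\mathbb{E}(\prod_{i=1}^d\lvert X_i\rvert)=\mathbb{E}(\prod_{i=1}^d G_i^{-1}(U))$, so the second is tight as well. Therefore $\mathbb{E}(\prod_{i=1}^d X_i)=-\mathbb{E}(\prod_{i=1}^d G_i^{-1}(U))$, which together with \eqref{min ineq} shows that this value equals $m$ and that equality holds.

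Since the comonotonicity step is imported verbatim from Lemma~\ref{max product}, I do not expect a genuine obstacle; the only point requiring care is the bookkeeping of the sign flip, so that maximizing $\mathbb{E}(\prod\lvert Y_i\rvert)$ is correctly seen to minimize $-\mathbb{E}(\prod\lvert Y_i\rvert)$ and hence to produce a valid lower bound rather than an upper one. I would also stress that the lemma is purely conditional: it certifies optimality of the comonotone-and-nonpositive-product configuration \emph{whenever such a configuration exists}, and does not establish existence, which is instead secured by the marginal assumptions (such as symmetry) imposed in the theorems applying this lemma.
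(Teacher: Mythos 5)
Your proposal is correct and follows essentially the same route as the paper, which proves the lemma by mirroring the argument of Lemma~\ref{max product} and noting that $\mathbb{E}\bigl(\prod_{i=1}^{d}X_i\bigr)=-\mathbb{E}\bigl(\prod_{i=1}^{d}\lvert X_i\rvert\bigr)$ when $\prod_{i=1}^{d}X_i\leq 0$ a.s. Your version merely spells out the sign-flip bookkeeping and the two tightness conditions that the paper leaves implicit, including the correct observation that the lemma is conditional on existence of the optimal configuration.
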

\begin{proof}
	The proof is similar to the proof of the previous lemma noting that $\mathbb{E}(\prod_{i=1}^{d}X_i)=-\mathbb{E}(\prod_{i=1}^{d}\lvert X_i\rvert)$ when $\prod_{i=1}^{d}X_i\leq 0$ a.s..
\end{proof}

\subsubsection{Symmetric marginal distributions}
The following two theorems are main contributions of this paper.
\begin{theorem}[Upper bound]\label{upper bound}
	Let $F_i$, $i=1,2,\dots,d$, be symmetric with zero means and $U\sim U[0,1]$. There exists a random vector $(X_1,X_2,\dots, X_d)$ such that the $\lvert X_1\rvert$, $\lvert X_2\rvert$, $\dots$, $\lvert X_d\rvert$ are comonotonic and $\prod_{i=1}^{d}X_i\geq 0$ a.s.. Hence, $M=\mathbb{E}(\prod_{i=1}^{d}G_i^{-1}(U))$ where $G_i$ denotes the df of $\lvert X_i\rvert$. Furthermore, if $d$ is odd, then $X_i=F_i^{-1}(U_i)$ with
	\begin{equation}\label{max copula for odd dimension}
		\begin{aligned}
			U_1&=U_2=\cdots=U_{d-2}=U, \\
			U_{d-1}&=IJU+I(1-J)(1-U)+(1-I)JU+(1-I)(1-J)(1-U), \\
			U_d&=IJU+I(1-J)(1-U)+(1-I)J(1-U)+(1-I)(1-J)U,
		\end{aligned}
	\end{equation}
	where $I=\mathds{1}_{U>\frac{1}{2}}$, $J=\mathds{1}_{V>\frac{1}{2}}$, and $V\overset{d}{=}U[0, 1]$ is independent of $U$. If $d$ is even, then $X_i=F_i^{-1}(U_i)$ with \begin{equation}\label{max copula for even dimension}
		U_1=U_2=\cdots=U_d=U.
	\end{equation} 
\end{theorem}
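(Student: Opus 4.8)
The plan is to read the result straight off Lemma~\ref{max product}: as soon as we exhibit a single random vector $(X_1,\dots,X_d)$ with $X_i\sim F_i$ whose absolute values are comonotonic and whose product is nonnegative a.s., the lemma yields $M=\mathbb{E}(\prod_{i=1}^d G_i^{-1}(U))$. The explicit copulas \eqref{max copula for odd dimension} and \eqref{max copula for even dimension} are meant to be exactly such vectors, so I would reduce the whole theorem to checking, for each parity of $d$, three things: (a) each $U_i\sim U[0,1]$, so that $X_i=F_i^{-1}(U_i)\sim F_i$; (b) $|X_1|,\dots,|X_d|$ are comonotonic; and (c) $\prod_{i=1}^d X_i\ge 0$ a.s.

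The engine is symmetry. Because $F_i$ is symmetric about $0$ we have $F_i^{-1}(1-u)=-F_i^{-1}(u)$, the df of $|X_i|$ is $G_i(x)=2F_i(x)-1$ on $[0,\infty)$, and hence $G_i^{-1}(w)=F_i^{-1}(\tfrac{1+w}{2})$. Combining these gives the key identity
\begin{equation*}
|F_i^{-1}(u)|=G_i^{-1}(|2u-1|),
\end{equation*}
a nondecreasing function of $|2u-1|$. Consequently, I would obtain (b) by showing that $|2U_i-1|$ is the same for every $i$, namely $|2U_i-1|=|2U-1|=:W$, since then $|X_i|=G_i^{-1}(W)$ for a common $W\sim U[0,1]$. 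Likewise, the sign of $X_i$ is governed solely by whether $U_i\gtrless\tfrac12$, so (c) becomes a parity count of the indices with $U_i<\tfrac12$.

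For even $d$ the copula \eqref{max copula for even dimension} sets all $U_i=U$, so (a) is trivial, $|2U_i-1|=|2U-1|$ gives (b) at once, and for (c) either all $X_i$ are negative (when $U<\tfrac12$) or all positive, so the number of negative factors is $0$ or $d$, both even, leaving the product nonnegative. The odd case is the substantive one: all $U_i=U$ would produce an odd number of negative factors on $\{U<\tfrac12\}$, so the product could turn negative. The construction \eqref{max copula for odd dimension} repairs this by injecting an independent fair coin $J=\mathds{1}_{V>1/2}$ and flipping the last two coordinates between $U$ and $1-U$: one checks $U_{d-1}=JU+(1-J)(1-U)$ and that $U_d$ equals $U$ or $1-U$ according to whether $I=\mathds{1}_{U>1/2}$ agrees with $J$. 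Since $|2(1-U)-1|=|2U-1|$, these flips never disturb $|2U_i-1|=|2U-1|$, so (b) holds verbatim — this is why the sign-decoupling is ``free'' for comonotonicity.

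The delicate points, which I expect to be the main obstacle, are (a) and (c) for $i=d-1,d$ in the odd case. Uniformity of $U_{d-1}$ is easy because $J\perp U$, but $U_d$ flips according to whether $I$ agrees with $J$, and $I$ is itself a function of $U$; one must verify that conditioning on $J$ confines $U_d$ to $(\tfrac12,1)$ or $(0,\tfrac12)$ yet leaves it uniform there, so that $U_d\sim U[0,1]$ overall. I would settle (c) by a finite case analysis over $(I,J)\in\{0,1\}^2$, tracking the signs of the $d-2$ leading coordinates (all equal to $\mathrm{sign}(F^{-1}(U))$) together with those of $X_{d-1},X_d$: the number of negative coordinates comes out as $0$, $2$, $d-1$, or $d-1$, each even because $d$ is odd, so $\prod_i X_i\ge 0$ in every case. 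Assembling (a)--(c) and invoking Lemma~\ref{max product} completes the proof.
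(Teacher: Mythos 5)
Your proposal is correct and takes essentially the same route as the paper: exhibit the explicit coupling, verify that the $\lvert X_i\rvert$ are comonotonic (your identity $\lvert F_i^{-1}(u)\rvert = G_i^{-1}(\lvert 2u-1\rvert)$ is equivalent to the paper's observation that $\lvert X_i\rvert = F_i^{-1}(Z)$ with $Z=\tfrac12+\lvert U-\tfrac12\rvert$) and that $\prod_i X_i\geq 0$ a.s.\ via the four $(I,J)$ cases, then invoke Lemma~\ref{max product}. Your additional check that $U_{d-1},U_d\sim U[0,1]$ (so that $X_i\sim F_i$) is a step the paper leaves implicit, and your parity count of negative coordinates ($0$, $2$, $d-1$, $d-1$) matches the paper's direct sign expansion of the product.
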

\begin{proof}
	(1) $d$ is odd. The random variables $X_j$, $j=1,2,\dots,d-2$, can be expressed as follows:
	\begin{equation*}
		X_j=IJF^{-1}_j(U)+I(1-J)F^{-1}_j(U)+(1-I)JF^{-1}_j(U)+(1-I)(1-J)F^{-1}_j(U).
	\end{equation*}
	Furthermore,
	\begin{equation*}
		\begin{aligned}
			X_{d-1}&=IJF^{-1}_{d-1}(U)+I(1-J)F^{-1}_{d-1}(1-U)+(1-I)JF^{-1}_{d-1}(U)+(1-I)(1-J)F^{-1}_{d-1}(1-U), \\
			X_d&=IJF^{-1}_d(U)+I(1-J)F^{-1}_d(1-U)+(1-I)JF^{-1}_d(1-U)+(1-I)(1-J)F^{-1}_d(U).
		\end{aligned}
	\end{equation*}
	It follows that
	\begin{equation*}
		\begin{aligned}
			\abs{X_j}&=IJF^{-1}_j(U)+I(1-J)F^{-1}_j(U)-(1-I)JF^{-1}_j(U)-(1-I)(1-J)F^{-1}_j(U) \\
			&=IF^{-1}_j(U)-(1-I)F^{-1}_j(U), \\
			\abs{X_{d-1}}&=IJF^{-1}_{d-1}(U)-I(1-J)F^{-1}_{d-1}(1-U)-(1-I)JF^{-1}_{d-1}(U)+(1-I)(1-J)F^{-1}_{d-1}(1-U) \\
			&=IJF^{-1}_{d-1}(U)+I(1-J)F^{-1}_{d-1}(U)-(1-I)JF^{-1}_{d-1}(U)-(1-I)(1-J)F^{-1}_{d-1}(U) \\
			&=IF^{-1}_{d-1}(U)-(1-I)F^{-1}_{d-1}(U), \\
			\abs{X_d}&=IJF^{-1}_d(U)-I(1-J)F^{-1}_d(1-U)+(1-I)JF^{-1}_d(1-U)-(1-I)(1-J)F^{-1}_d(U) \\
			&=IJF^{-1}_d(U)+I(1-J)F^{-1}_d(U)-(1-I)JF^{-1}_d(U)-(1-I)(1-J)F^{-1}_d(U) \\
			&=IF^{-1}_{d}(U)-(1-I)F^{-1}_{d}(U), \\
		\end{aligned}
	\end{equation*}
	where we used in the second equations for $\abs{X_{d-1}}$ and $\abs{X_d}$ that $F_{d-1}$ resp. $F_{d}$ is symmetric. Note that the $\lvert X_i\rvert$ also write as $\lvert X_i\rvert=F_i^{-1}(Z)$, $i=1,2,\dots, d$, where $Z=U$ if $U\geq\frac{1}{2}$ and $Z=1-U$ if $U<\frac{1}{2}$,  
	i.e., $Z=\frac{1}{2}+\lvert U-\frac{1}{2}\rvert$. Next, we show that $\lvert X_1\rvert, \lvert X_2\rvert, \dots, \lvert X_d\rvert$ are comonotonic and that $\prod_{i=1}^{d}X_i\geq 0$ a.s.. First, $\lvert X_i\rvert$, $i=1,2,\dots, d$, are comonotonic because they are all increasing functions of $\lvert U-\frac{1}{2}\rvert$. Second,
	\begin{equation*}
		\begin{aligned}
			\prod_{i=1}^{d}X_i=&IJ\prod_{i=1}^{d}F^{-1}_i(U)+I(1-J)F^{-1}_{d-1}(1-U)F^{-1}_{d}(1-U)\prod_{i=1}^{d-2}F^{-1}_i(U)+\\
			&(1-I)J\prod_{i=1}^{d-1}F^{-1}_i(U)F^{-1}_{d}(1-U)+(1-I)(1-J)F^{-1}_{d-1}(1-U)F^{-1}_d(U)\prod_{i=1}^{d-2}F^{-1}_1(U),
		\end{aligned}
	\end{equation*}
	which is greater than or equal to zero because $F^{-1}_i(U)>0$ and $F^{-1}_i(1-U)\leq0$ when $U>\frac{1}{2}$, and $F^{-1}_i(U)\leq 0$ and $F^{-1}_i(1-U)>0$ when $U\leq\frac{1}{2}$, and where we use that $d$ is odd. Therefore, the vector $(X_1,X_2,\dots, X_d)$ with $X_i=F_i^{-1}(U_i)$, in which the $U_i$ are given as in \eqref{max copula for odd dimension}, attains $M$. 
	
	(2) $d$ is even. The random variables $X_i$ can be expressed as  $X_i=IF^{-1}_i(U)+(1-I)F^{-1}_i(U)$. Hence, $\lvert X_i\rvert=IF^{-1}_i(U)-(1-I)F^{-1}_i(U)$. It is clear that $\lvert X_1\rvert, \lvert X_2\rvert, \dots, \lvert X_d\rvert$ are comonotonic, as they are all increasing in $\lvert U-\frac{1}{2}\rvert$. Furthermore, $\prod_{i=1}^{d}X_i=I\prod_{i=1}^{d}F^{-1}_i(U)+(1-I)\prod_{i=1}^{d}F^{-1}_i(U),$
	which is greater than or equal to zero (note that $d$ is even). Therefore, the random vector $(X_1,X_2,\dots, X_d)$ with $X_i=F_i^{-1}(U)$ attains $M$. 
\end{proof}
\begin{theorem}[Lower bound]\label{lower bound}
	Let $F_i$, $i=1,2,\dots,d$, be symmetric with zero means and $U\sim U[0,1]$. There exists a random vector $(X_1,X_2,\dots, X_d)$ such that $\lvert X_1\rvert$, $\lvert X_2\rvert$, $\dots$, $\lvert X_d\rvert$ are comonotonic and $\prod_{i=1}^{d}X_i\leq 0$ a.s.. Hence, $m=-\mathbb{E}(\prod_{i=1}^{d}G_i^{-1}(U))$ where $G_i$ is the df of $\lvert X_i\rvert$. Furthermore, if $d$ is odd, then $X_i=F_i^{-1}(U_i)$ with
	\begin{equation}\label{min copula for odd dimension}
		\begin{aligned}
			U_1&=U_2=\cdots=U_{d-2}=U, \\
			U_{d-1}&=IJU+I(1-J)(1-U)+(1-I)JU+(1-I)(1-J)(1-U), \\
			U_d&=IJ(1-U)+I(1-J)U+(1-I)JU+(1-I)(1-J)(1-U),
		\end{aligned}
	\end{equation}
	where $I=\mathds{1}_{U>\frac{1}{2}}$, $J=\mathds{1}_{V>\frac{1}{2}}$, and $V\overset{d}{=}U[0, 1]$ is independent of $U$. If $d$ is even, then $X_i=F_i^{-1}(U_i)$ with \begin{equation}\label{min copula for even dimension}
		\begin{aligned}
			U_1&=U_2=\cdots=U_{d-1}=U\quad \hbox{and} \quad U_d=1-U.
		\end{aligned}
	\end{equation}
\end{theorem}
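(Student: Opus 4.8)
The plan is to invoke Lemma~\ref{min product}: once we verify that the proposed vector $(X_1,\dots,X_d)$ has comonotonic absolute values and satisfies $\prod_{i=1}^{d}X_i\le 0$ a.s., the identity $m=-\mathbb{E}(\prod_{i=1}^{d}G_i^{-1}(U))$ is immediate, since then $(X_1,\dots,X_d)$ meets the equality condition of that lemma. Thus the whole task reduces to establishing these two structural properties of the constructions in \eqref{min copula for odd dimension} and \eqref{min copula for even dimension}.

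The cleanest route exploits the relationship between these copulas and the maximizing copulas of Theorem~\ref{upper bound}. In both the odd and even cases, the coordinates $U_1,\dots,U_{d-1}$ here coincide with those in \eqref{max copula for odd dimension} and \eqref{max copula for even dimension}, while the last coordinate satisfies $U_d^{\min}=1-U_d^{\max}$. Indeed, using that $IJ+I(1-J)+(1-I)J+(1-I)(1-J)=1$, one checks termwise that replacing $U\leftrightarrow 1-U$ in $U_d^{\max}$ produces exactly $U_d^{\min}$. Since $F_d$ is symmetric with zero mean, $F_d^{-1}(1-u)=-F_d^{-1}(u)$, so the last coordinate of the minimizing vector is the negative of that of the maximizing vector, $X_d^{\min}=-X_d^{\max}$, while $X_1,\dots,X_{d-1}$ are unchanged.

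From this single observation both required properties follow at once. First, $\lvert X_d^{\min}\rvert=\lvert X_d^{\max}\rvert$ and the remaining absolute values are identical, so the comonotonicity of $\lvert X_1\rvert,\dots,\lvert X_d\rvert$ is inherited directly from the proof of Theorem~\ref{upper bound}: each is an increasing function of $\lvert U-\tfrac12\rvert$, namely $\lvert X_i\rvert=F_i^{-1}(\tfrac12+\lvert U-\tfrac12\rvert)$. Second, $\prod_{i=1}^{d}X_i^{\min}=-\prod_{i=1}^{d}X_i^{\max}\le 0$ a.s., because the maximizing construction already guarantees $\prod_{i=1}^{d}X_i^{\max}\ge 0$. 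Applying Lemma~\ref{min product} then yields the claimed value of $m$.

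If a self-contained verification is preferred, the alternative is to compute $\lvert X_i\rvert$ and the sign of $\prod_{i}X_i$ directly on each of the four events determined by $(I,J)$ in the odd case, and directly in the even case. The only delicate point---and the place where the parity of $d$ is essential---is the sign bookkeeping for the product: on the events $\{I=0\}$ the factors $F_j^{-1}(U)$ for $j\le d-2$ are each nonpositive, contributing $(-1)^{d-2}$, and one must check that the two remaining factors combine with this to produce a nonpositive product in every case. Tracking these signs case by case is the main bookkeeping obstacle, but the symmetry argument above sidesteps it entirely and is the approach I would present.
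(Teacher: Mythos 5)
Your proof is correct, but it takes a genuinely different route from the paper's. The paper omits the proof of Theorem~\ref{lower bound} entirely, remarking only that it is ``similar to that of the upper bound''; the intended argument is thus a self-contained case-by-case verification on the events determined by $(I,J)$ --- precisely the sign bookkeeping you describe (and deliberately avoid) in your final paragraph. Your reduction instead observes that $U_i^{\min}=U_i^{\max}$ for $i\le d-1$ while $U_d^{\min}=1-U_d^{\max}$, which indeed checks out termwise in both the odd case (using $IJ+I(1-J)+(1-I)J+(1-I)(1-J)=1$) and the even case, so that symmetry of $F_d$ gives $X_d^{\min}=-X_d^{\max}$ a.s.\ with all other coordinates unchanged; comonotonicity of the $\lvert X_i\rvert$ and the inequality $\prod_{i=1}^{d}X_i\le 0$ are then inherited from Theorem~\ref{upper bound}, and Lemma~\ref{min product} concludes. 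This buys brevity, makes transparent \emph{why} the minimizing copula is exactly the maximizing copula with its last coordinate reflected, and even delivers the marginal property $X_d^{\min}\sim F_d$ for free from $X_d^{\max}\sim F_d$ and symmetry, a point the direct verification leaves implicit in the uniformity of $U_d$. The one technicality worth stating is that $F_d^{-1}(1-u)=-F_d^{-1}(u)$ can fail at the (at most countably many) discontinuity points of the quantile function; since $U$ is atomless the identity holds almost surely, which is all your argument needs --- and the paper's own proof of the upper bound rests on the same property.
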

We omit the proof of the lower bound because it is similar to that of the upper bound. Figure~\ref{optimal copulas} presents the supports of the copulas in \eqref{max copula for odd dimension} and \eqref{min copula for odd dimension} when $d=3$. As the projections of the support on the planes formed by the x-axis and y-axis, resp.\ x-axis and z-axis, resp.\ y-axis and z-axis form crosses, we label these copulas as \textit{cross product} copulas. Note that the simulation shows the densities of the cross product copulas are uniform on each of the segments.

\begin{figure}[!h]
	\centering
	\includegraphics[width=0.9\textwidth]{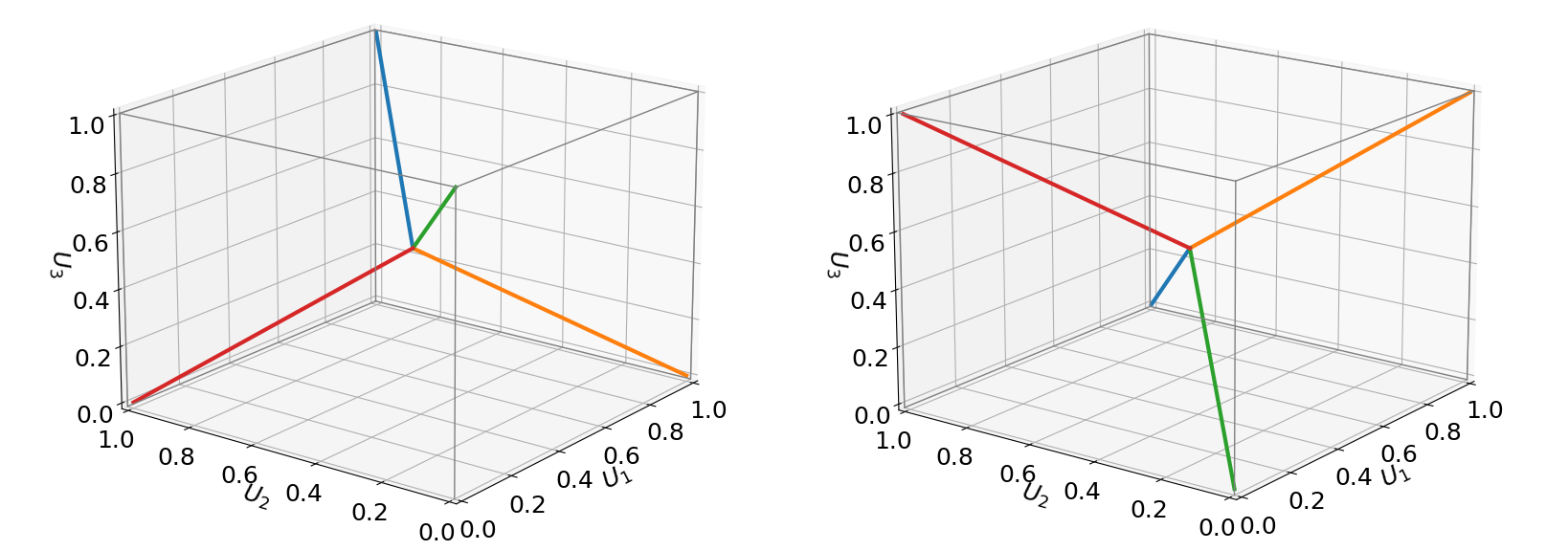}
	\caption{Support of the (cross product) copula that maximizes (left panel) resp.\ minimizes (right panel) $\mathbb{E}(X_1X_2X_3)$ where $X_i\sim F_i$ ($i=1,2,3$) in the case that the $F_i$ are symmetric with zero means.}
	\label{optimal copulas}
\end{figure}
\begin{corollary}\label{uniform margins in d dimensions}
	Let the $F_i$ be uniform distributions on $\left[-\sqrt{3},\sqrt{3}\right]$. Then $M=\left(\sqrt{3}\right)^{d}/(d+1)$ and is attained by a random vector $(X_1,X_2,\dots, X_d)$, where $X_i=F_i^{-1}(U_i)$ in which the $U_i$ are given in \eqref{max copula for odd dimension} $($resp., \eqref{max copula for even dimension}$)$ if $d$ is odd $($resp., even$)$. Furthermore, $m=-\left(\sqrt{3}\right)^{d}/(d+1)$ and is attained by a random vector $(X_1,X_2,\dots, X_d)$ where $X_i=F_i^{-1}(U_i)$ in which the $U_i$ are given in \eqref{min copula for odd dimension} $($resp., \eqref{min copula for even dimension}$)$ if $d$ is odd $($resp., even$)$.
\end{corollary}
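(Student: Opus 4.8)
The plan is to recognize that the uniform law on $[-\sqrt{3},\sqrt{3}]$ is symmetric about the origin with mean zero, so that Theorems~\ref{upper bound} and~\ref{lower bound} apply verbatim. These theorems already supply both the representations $M=\mathbb{E}(\prod_{i=1}^{d}G_i^{-1}(U))$ and $m=-\mathbb{E}(\prod_{i=1}^{d}G_i^{-1}(U))$, together with the attaining copulas \eqref{max copula for odd dimension}--\eqref{min copula for even dimension}. Hence the only remaining work is to evaluate this single expectation in closed form, so I would first pin down the distribution $G_i$ of $\lvert X_i\rvert$ and then integrate.

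First I would compute the law of $\lvert X_i\rvert$. Since $X_i$ has density $\tfrac{1}{2\sqrt{3}}$ on $[-\sqrt{3},\sqrt{3}]$, folding at the origin shows that $\lvert X_i\rvert$ is uniform on $[0,\sqrt{3}]$, with df $G_i(y)=y/\sqrt{3}$ on that interval and hence quantile function $G_i^{-1}(u)=\sqrt{3}\,u$ for $u\in[0,1]$. This folding step is also where the scaling constant $\sqrt{3}$ enters: it is precisely the choice that normalizes each $X_i$ to unit variance, which is what makes the resulting bounds directly comparable across dimensions.

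Next I would substitute this quantile function into the expectation furnished by the theorems. Because every factor coincides, $\prod_{i=1}^{d}G_i^{-1}(U)=(\sqrt{3})^{d}U^{d}$, so that $\mathbb{E}(\prod_{i=1}^{d}G_i^{-1}(U))=(\sqrt{3})^{d}\,\mathbb{E}(U^{d})=(\sqrt{3})^{d}/(d+1)$, using $\mathbb{E}(U^{d})=\int_{0}^{1}u^{d}\,du=1/(d+1)$. This yields $M=(\sqrt{3})^{d}/(d+1)$ at once, and the lower-bound theorem gives $m=-(\sqrt{3})^{d}/(d+1)$. The optimal vectors are exactly those produced by \eqref{max copula for odd dimension} and \eqref{min copula for odd dimension} when $d$ is odd, resp.\ by \eqref{max copula for even dimension} and \eqref{min copula for even dimension} when $d$ is even, so no further verification of the attainment claim is needed.

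There is no genuine obstacle here: the corollary is a direct specialization of the two symmetric-margin theorems combined with an elementary moment computation. The only point demanding a moment's care is the folding step, namely confirming that $\lvert X_i\rvert\sim U[0,\sqrt{3}]$ and that its quantile function is the simple rescaling $G_i^{-1}(u)=\sqrt{3}\,u$; once that is in hand the remaining integral is immediate.
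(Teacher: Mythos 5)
Your proposal is correct and follows essentially the same route as the paper: both invoke Theorems~\ref{upper bound} and~\ref{lower bound} for symmetric zero-mean margins, observe that $\lvert X_i\rvert\sim U[0,\sqrt{3}]$, and reduce the bound to the elementary moment $\mathbb{E}(U^d)=1/(d+1)$ (the paper integrates the density of $\lvert X_i\rvert$ directly, while you use the quantile representation $G_i^{-1}(u)=\sqrt{3}\,u$, which is the same computation). No gaps.
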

\begin{proof}
	With $X_i\sim \left[-\sqrt{3},\sqrt{3}\right]$, we find that $|X_i|:=V\sim U\left[0,\sqrt{3}\right]$. Then we find from Theorem \ref{upper bound} that $M = \mathbb{E}(V^{d})=\int_{0}^{\sqrt{3}}v^d\frac{\sqrt{3}}{3}dv=\frac{3^{\frac{d}{2}}}{d+1}.$
	In a similar way, we find from Theorem \ref{lower bound} that $m=-3^{\frac{d}{2}}/(d+1)$.
\end{proof}
\begin{remark}
	For general distribution functions $F_i$, $i=1,2,\dots,d$, the upper bound in \eqref{max ineq} and the lower bound in \eqref{min ineq} are typically not attainable. Moreover, the construction $X_i=F_i^{-1}(U_i)$ with the $U_i$ as given in \eqref{max copula for odd dimension} ($d$ is odd) resp.\ as given in \eqref{max copula for even dimension} ($d$ is even) does not lead to the sharp bound $M$ (similar for the case of the lower bound  $m$). To illustrate this point, let the $F_i$, $i=1,2,3,$ denote discrete distributions having mass points -1 and 10 with equal probability. Under a comonotonic dependence among the $X_i\sim F_i$, we obtain that $\mathbb{E}\left(X_1X_2X_3\right)=499.5$. However, under the dependence as in \eqref{max copula for odd dimension}, we obtain that $\mathbb{E}\left(X_1X_2X_3\right)=257.5 <499.5$. Moreover, $\mathbb{E}\left(\lvert X_1\rvert\lvert X_2\rvert\lvert X_3\rvert\right)=500.5$ is not attainable.
\end{remark}

\subsubsection{Marginal distributions under domain restrictions}

In this subsection, we provide sharp bounds under various conditions that the domain of $F_i$ is non-negative or non-positive, or that the $F_i$ are uniforms on $(a,b)$, $a<0<b$. 

\begin{proposition}[Non-negative domain]\label{non-neg}
	Let $X_i\sim F_i$ in which the $F_i$ have non-negative domain. 
	\begin{enumerate}[$(1)$]
		\item The upper bound $M$ is attained when $(X_1,X_2,\dots, X_d)$ is a comonotonic random vector, i.e., $X_i=F_i^{-1}(U)$ in which $U \sim U[0,1]$.
		\item Under a mixing assumption on the distributions of $\ln X_1, \ln X_2, \dots, \ln X_d$, i.e., $\sum_{i=1}^{d}\ln X_i=c$, where $c$ is a constant, it holds that $m=e^c$ and $m$ is attained by $(X_1,X_2,\dots,X_d)$.
	\end{enumerate}
\end{proposition}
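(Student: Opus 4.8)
The plan is to handle the two parts separately, with part (1) being an immediate consequence of Lemma~\ref{max product} and part (2) resting on a logarithmic transform combined with Jensen's inequality.

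For part (1), I would first note that since the $F_i$ have non-negative domain we have $X_i = \lvert X_i\rvert$ a.s., so the distribution function $G_i$ of $\lvert X_i\rvert$ coincides with $F_i$ and $G_i^{-1} = F_i^{-1}$. Taking the comonotonic coupling $X_i = F_i^{-1}(U)$, the variables $\lvert X_i\rvert = F_i^{-1}(U)$ are comonotonic and $\prod_{i=1}^d X_i \geq 0$ holds trivially because each factor is non-negative. The second part of Lemma~\ref{max product} then applies verbatim and yields that this coupling attains $M = \mathbb{E}(\prod_{i=1}^d F_i^{-1}(U))$. This part requires no real work beyond spelling out these identifications.

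For part (2), I would assume $X_i > 0$ a.s. so that $\ln X_i$ is well defined and write the product in exponential form, $\prod_{i=1}^d X_i = \exp(\sum_{i=1}^d \ln X_i)$. For an arbitrary coupling set $S = \sum_{i=1}^d \ln X_i$. Since $\exp$ is convex, Jensen's inequality gives $\mathbb{E}(\prod_{i=1}^d X_i) = \mathbb{E}(e^S) \geq e^{\mathbb{E}(S)} = \exp(\sum_{i=1}^d \mathbb{E}(\ln X_i))$. The key observation is that the right-hand side depends only on the marginals $F_i$ and not on the dependence structure, so it is a valid lower bound for $m$ across all couplings. Equality in Jensen's inequality holds precisely when $S$ is almost surely constant, and the mixing assumption is exactly the statement that a coupling with $\sum_{i=1}^d \ln X_i = c$ a.s. exists. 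For that coupling, taking expectations gives $c = \sum_{i=1}^d \mathbb{E}(\ln X_i)$, so $\mathbb{E}(\prod_{i=1}^d X_i) = e^c$ equals the Jensen lower bound; hence the bound is attained and $m = e^c$.

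The routine parts (rewriting the product, applying Jensen) are straightforward; the conceptual crux is recognizing that the log-transform converts the minimization of a product with fixed marginals into the minimization of $\mathbb{E}(e^S)$ over couplings of the $\ln X_i$, for which constancy of $S$ (the mixing condition) is the exact equality case. The main thing to be careful about is the identification $c = \sum_{i=1}^d \mathbb{E}(\ln X_i)$ and the tacit positivity assumption $X_i > 0$ needed for the logarithm; if some $F_i$ place mass at $0$ the statement should be read with $X_i > 0$ a.s.
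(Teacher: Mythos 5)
Your proposal is correct and follows essentially the same route as the paper's own proof: part (1) is deduced directly from Lemma~\ref{max product} (using that $G_i=F_i$ on a non-negative domain), and part (2) uses the identity $\prod_i X_i=\exp(\sum_i \ln X_i)$ together with Jensen's inequality, with the mixing condition $\sum_i\ln X_i=c$ a.s.\ giving the equality case. Your added care about the identification $c=\sum_i\mathbb{E}(\ln X_i)$ and the tacit positivity assumption is a reasonable refinement of what the paper leaves implicit.
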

\begin{proof}
	The first statement follows from Lemma \ref{max product} in a direct manner. As for the proof of the second statement, it holds for any $Y_i\sim F_i$, $i=1,2,\dots,d$ that $\mathbb{E}\left(\prod_{i=1}^{d}Y_i\right)=\mathbb{E}\left(\exp \left(\sum_{i=1}^{d}\ln Y_i\right)\right)\geq \exp\left(\mathbb{E}\left(\sum_{i=1}^{d}\ln Y_i\right)\right)$ where the last inequality follows from Jensen's inequality. Furthermore, this inequality turns into an equality when  $\sum_{i=1}^{d}\ln Y_i $ is constant. This implies the second statement.
\end{proof}
\begin{remark}
	For k-th order mixed moments and $X_i \sim F_i$ in which the $F_i$ have non-negative domains, $i=1,2,\dots,d$, we obtain from Proposition \ref{non-neg} the lower and upper bound of $\mathbb{E}\left(X_1^{k_1}X_2^{k_2}\cdots X_d^{k_d}\right)$. Under a mixing assumption on the distributions of $k_1\ln X_1,k_2\ln X_2,\dots,k_d\ln X_d$, it holds that the lower bound is attained by $(X_1,X_2,\dots,X_d)$. The mixing conditions for this case are given in Section 3 of \cite{wang2016joint}. In particular, this holds true in case $k_1 = k_2 =\cdots=k_d=l$ in which $l$ is an integer and $l\geq 1$, and the lower bound is $m^{l}$, where $m$ is the value in \eqref{min of unif mix}.
\end{remark}
\begin{proposition}[Non-positive domain]
	\label{non_pos}
	Let $X_i\sim F_i$ in which the $F_i$ have non-positive domain and $U \sim U[0,1]$. 
	\begin{enumerate}[$(1)$]
		\item Let $d$ be an odd number. Under a mixing assumption on the distributions of $\ln \lvert X_1\rvert, \ln\lvert X_2\rvert, \dots,$ $\ln\lvert X_d\rvert$, i.e., $\sum_{i=1}^{d}\ln\lvert X_i\rvert=c$, where $c$ is a constant, it holds that $M=-e^c$ and $M$ is attained by $(X_1,X_2,\dots,X_d)$. $m$ is attained when $(X_1,X_2,\dots, X_d)$ is a comonotonic random vector, i.e., $X_i=F_i^{-1}(U)$.
		\item Let $d$ be an even number. $M$ is attained when $(X_1,X_2,\dots,X_d)$ is a comonotonic random vector, i.e., $X_i=F_i^{-1}(U)$. Under a mixing assumption on the distributions of $\ln\lvert X_1\rvert, \ln\lvert X_2\rvert, \dots, \ln\lvert X_d\rvert$, i.e., $\sum_{i=1}^{d}\ln\lvert X_i\rvert=c$, where $c$ is a constant, it holds that $m=e^c$ and $m$ is attained by $(X_1,X_2,\dots,X_d)$.
	\end{enumerate}
\end{proposition}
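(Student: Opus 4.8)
The plan is to reduce the whole statement to the non-negative case already handled in Proposition~\ref{non-neg} by passing to absolute values. Since the $F_i$ have non-positive domain, I can write $X_i=-\lvert X_i\rvert$ with $\lvert X_i\rvert\geq 0$; setting $Y_i:=\lvert X_i\rvert\sim G_i$ gives $\ln Y_i=\ln\lvert X_i\rvert$ and
\begin{equation*}
\prod_{i=1}^{d}X_i=(-1)^d\prod_{i=1}^{d}Y_i.
\end{equation*}
Taking expectations yields $\mathbb{E}\bigl(\prod_{i=1}^{d}X_i\bigr)=(-1)^d\,\mathbb{E}\bigl(\prod_{i=1}^{d}Y_i\bigr)$, so that optimizing $\mathbb{E}\bigl(\prod_{i=1}^{d}X_i\bigr)$ is, up to the global sign $(-1)^d$, the same as optimizing $\mathbb{E}\bigl(\prod_{i=1}^{d}Y_i\bigr)$ over couplings of the non-negative margins $G_i$ — exactly the problem solved in Proposition~\ref{non-neg}.

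For $d$ odd the sign is negative, $\mathbb{E}\bigl(\prod_{i=1}^{d}X_i\bigr)=-\mathbb{E}\bigl(\prod_{i=1}^{d}Y_i\bigr)$. Hence maximizing the left-hand side amounts to minimizing $\mathbb{E}\bigl(\prod_{i=1}^{d}Y_i\bigr)$: by the Jensen argument in the proof of Proposition~\ref{non-neg}$(2)$ one has $\mathbb{E}\bigl(\prod_{i=1}^{d}Y_i\bigr)=\mathbb{E}\bigl(\exp(\sum_{i=1}^{d}\ln Y_i)\bigr)\geq\exp\bigl(\mathbb{E}(\sum_{i=1}^{d}\ln Y_i)\bigr)=e^{c}$ under the mixing condition $\sum_{i=1}^{d}\ln\lvert X_i\rvert=c$, with equality attained, giving $M=-e^{c}$. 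Symmetrically, minimizing the left-hand side amounts to maximizing $\mathbb{E}\bigl(\prod_{i=1}^{d}Y_i\bigr)$, which by Proposition~\ref{non-neg}$(1)$ is attained under comonotonicity of the $Y_i$. For $d$ even the sign is positive, $\mathbb{E}\bigl(\prod_{i=1}^{d}X_i\bigr)=\mathbb{E}\bigl(\prod_{i=1}^{d}Y_i\bigr)$, so the maximum is attained under comonotonicity of the $Y_i$ and the minimum equals $e^{c}$ under the same mixing condition, i.e.\ $m=e^{c}$.

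What remains is to translate ``comonotonicity of the $Y_i$'' back into a statement about the $X_i$. Because $Y_i=-X_i$ is the same strictly decreasing transformation applied to every coordinate, and comonotonicity is preserved when one applies a common monotone map coordinatewise, the $Y_i$ are comonotonic if and only if the $X_i$ are comonotonic. Concretely, using $G_i^{-1}(p)=-F_i^{-1}(1-p)$, the comonotonic coupling $Y_i=G_i^{-1}(U)$ corresponds to $X_i=-G_i^{-1}(U)=F_i^{-1}(1-U)$, which after the reflection $U\mapsto 1-U$ is precisely the comonotonic coupling $X_i=F_i^{-1}(U)$. This identifies the optimal vector in each case as the comonotonic one, completing both $(1)$ and $(2)$.

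The only genuinely delicate point is this last correspondence between comonotonicity of the $\lvert X_i\rvert$ and of the $X_i$, together with keeping track of the reflection $U\mapsto 1-U$; once it is in place, every other ingredient — the Jensen bound and the comonotone maximization — is inherited verbatim from Proposition~\ref{non-neg}, and I expect no obstacle beyond careful bookkeeping of the sign $(-1)^d$.
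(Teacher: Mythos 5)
Your proof is correct and takes essentially the same route as the paper, which omits the argument precisely because it is ``similar to that of Proposition~\ref{non-neg}'': Jensen's inequality applied to $\sum_i \ln\lvert X_i\rvert$ gives the mixing bound, and comonotonicity of the absolute values gives the other bound. Your explicit reduction $X_i=-\lvert X_i\rvert$ with the sign $(-1)^d$, together with the observation that comonotonicity of the $\lvert X_i\rvert$ is equivalent to comonotonicity of the $X_i$ (via $G_i^{-1}(p)=-F_i^{-1}(1-p)$ and the reflection $U\mapsto 1-U$), is exactly the bookkeeping the paper leaves to the reader.
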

\proof Its proof is similar to that of Proposition \ref{non-neg}, we thus omit it.\hfill$\Box$

Proposition \ref{non_pos} shows that when the $F_i$ have non-positive domain and $d$ is odd, an upper bound on $M$ is given by $-\exp\left(\mathbb{E}\left(\sum_{i=1}^{d}\ln\lvert Y_i\rvert\right)\right)$, $Y_i \sim F_i$.  \cite{wang2011complete, wang2015extreme}, \cite{puccetti2015extremal}, and  \cite{puccetti2012advances} provide general conditions on the $F_i$ that ensure the construction of $X_i\sim F_i$ such that the distributions of $X_1,X_2, \dots, X_d$ are mixing and thus allow to infer sharpness of the bounds above. For early results of this type, see \cite{gaffke1981class} and \cite{ruschendorf2002n}. 

\begin{proposition}[Uniform distributions with non-zero means]
	\label{prop31}
	Let $X_i\sim F_i$, $i=1,2,\dots, d$. Assume that $d$ is odd and that the $F_i$ are uniform distributions on $[a, b]$ $(a<0<b).$  Define $J=\mathds{1}_{V>\frac{1}{2}}$, in which $V\sim U[0, 1]$ is independent of $U\sim U[0, 1]$. It holds that:
	\begin{enumerate}[$(1)$]
		\item Let $\abs{a}<b$ and $c=\frac{-2a}{b-a}$. $M$ is attained by a random vector $(X_1,X_2,\dots, X_d),$ with $X_i=F_i^{-1}(U_i)$ in which
		\begin{align}
				U_1=&U_2=\cdots=U_{d-2}=U, \label{strong positive}\\
				U_{d-1}=&(1-I)[KJU+K(1-J)(c-U)+(1-K)JU+(1-K)(1-J)(c-U)]+IU, \notag\\
				U_d=&(1-I)[KJU+K(1-J)(c-U)+(1-K)J(c-U)+(1-K)(1-J)U]+IU,\notag
		\end{align}
		and where $I=\mathds{1}_{U> c}$ and $K=\mathds{1}_{U>\frac{c}{2}}$. 
		\item Let $\abs{a}>b$ and $c=\frac{-b-a}{b-a}$. 
		$m$ is attained by the random vector $(X_1,X_2,\dots, X_d)$, with  $X_i=F_i^{-1}(U_i)$ in which 
			\begin{align}
				U_1=&U_2=\cdots=U_{d-2}=U, \label{strong negative}\\
				U_{d-1}=&(1-I)[KJU+K(1-J)(1+c-U)+(1-K)JU+(1-K)(1-J)(1+c-U)]+IU, \notag\\
				U_d=&(1-I)[KJ(1+c-U)+K(1-J)U+(1-K)JU+(1-K)(1-J)(1+c-U)]+IU,\notag
			\end{align}
			
		and where $I=\mathds{1}_{U<c}$ and $K=\mathds{1}_{U>\frac{1+c}{2}}$. 
	\end{enumerate} 
\end{proposition}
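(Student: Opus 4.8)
The plan is to reduce Proposition~\ref{prop31} to the symmetric case already solved in Theorems~\ref{upper bound} and~\ref{lower bound}, exploiting the special structure of uniform marginals on $[a,b]$ with $a<0<b$. The key observation is that for a uniform $F_i$ on $[a,b]$ we have $F_i^{-1}(u)=a+(b-a)u$, so the quantile is affine in $u$; this linearity is what makes the explicit mixing constructions tractable. For part~(1) with $\abs{a}<b$, the mass $U>c$ (where $c=\frac{-2a}{b-a}$ is precisely the probability level at which $F_i^{-1}$ changes sign, i.e.\ $F_i^{-1}(c)=0$) corresponds to the region where the quantile is positive and, moreover, where $\abs{F_i^{-1}(u)}$ exceeds every value attainable on the negative side. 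On that region the indicator $I=\mathds{1}_{U>c}$ forces all $U_i=U$, i.e.\ full comonotonicity, which by Lemma~\ref{max product} is locally optimal because there the product is automatically non-negative and the absolute values are simultaneously maximized. On the complementary region $U\le c$, the affine map $u\mapsto c-u$ plays the role that $u\mapsto 1-u$ played in the symmetric case: it is the reflection about $c/2$ that sends the sub-interval $[0,c]$ onto itself while reversing orientation, so $F_i^{-1}(c-U)=-F_i^{-1}(U)$ on that piece by symmetry of the truncated uniform about $0$.

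First I would verify the sign-flip identity $F_i^{-1}(c-u)=-F_i^{-1}(u)$ for $u\in[0,c]$, which is immediate from affinity and the definition of $c$. Second, I would compute $\abs{X_{d-1}}$ and $\abs{X_d}$ exactly as in the proof of Theorem~\ref{upper bound}: expanding the four-term decomposition indexed by $K$ and $J$ and using the identity just established, the $(1-J)$ terms pick up a sign that cancels against the reflection, leaving $\abs{X_i}$ a single increasing function of $U$ on the region $U\le c$, hence comonotonic with the remaining $\abs{X_j}$. Third, I would check that on $U>c$ comonotonicity trivially gives comonotone absolute values. Combining the two regions shows $\abs{X_1},\dots,\abs{X_d}$ are globally comonotonic. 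Fourth, I would expand the product $\prod_{i=1}^d X_i$ over the four $(K,J)$ cells (for $U\le c$) plus the comonotone cell (for $U>c$) and confirm $\prod_i X_i\ge 0$ a.s., using that $d$ is odd exactly as before: on the negative side an odd number of sign flips is arranged to land on an even count. Once both conditions of Lemma~\ref{max product} hold, sharpness of $M$ follows immediately. Part~(2) is the mirror image: with $\abs{a}>b$ and $c=\frac{-b-a}{b-a}$ the dominant-magnitude region is now on the negative side $U<c$, the reflection becomes $u\mapsto 1+c-u$ on the sub-interval $[c,1]$, and one instead arranges $\prod_i X_i\le 0$ a.s.\ to invoke Lemma~\ref{min product} for the lower bound $m$.

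The main obstacle will be the bookkeeping in the sign analysis of the product over the four $(K,J)$ cells: one must track, cell by cell, how many of the factors $F_i^{-1}(U_i)$ are negative and confirm the parity works out to make the product non-negative (resp.\ non-positive) for \emph{every} cell and \emph{every} value of $U$, not merely on average. The asymmetry $\abs{a}\ne b$ is what necessitates the extra indicator $K=\mathds{1}_{U>c/2}$ (resp.\ $K=\mathds{1}_{U>\frac{1+c}{2}}$): unlike the symmetric case where a single reflection about $1/2$ sufficed, here the positive and negative supports have different lengths, so the construction must first isolate the ``balanced'' sub-interval $[0,c]$ (on which $\abs{F^{-1}}$ ranges symmetrically about $0$) via $K$, and only then apply the $J$-driven reflection within it. Verifying that this two-level splitting preserves each marginal law $U_{d-1},U_d\sim U[0,1]$ — equivalently, that the construction is measure-preserving on $[0,1]$ — is the second delicate point; I would check it by confirming the pieces partition $[0,1]$ into sets whose images under the respective affine maps tile $[0,1]$ without overlap, so that each $U_i$ remains standard uniform and the vector is a genuine coupling of the prescribed marginals.
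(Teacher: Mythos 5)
Your proposal is correct in substance and follows essentially the same route as the paper's proof: establish the reflection identity $F_i^{-1}(c-u)=-F_i^{-1}(u)$ on $[0,c]$, expand $X_{d-1}$ and $X_d$ over the $(K,J)$ cells to conclude that the $\abs{X_i}$ are comonotonic and that $\prod_{i=1}^{d}X_i\geq 0$ (resp.\ $\leq 0$) a.s., and then invoke Lemma~\ref{max product} (resp.\ Lemma~\ref{min product}). Two slips in your narration should be fixed in the write-up, though neither derails the argument: the quantile changes sign at $c/2$, not at $c$ (in fact $F_i^{-1}(c)=-a=\abs{a}$, which is precisely why the comonotone region $\{U>c\}$ dominates the balanced region $[0,c]$ in magnitude, and it is $I$, not $K$, that isolates $[0,c]$), and on $\{U\leq c\}$ the $\abs{X_i}$ are increasing functions of $\abs{U-c/2}$ rather than of $U$, so global comonotonicity follows from the common representation $\abs{X_i}=F_i^{-1}\bigl(c/2+\abs{U-c/2}\bigr)$, exactly as in the paper.
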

\proof	\begin{enumerate}[$(1)$]
		\item Note that $F_i^{-1}(u)>0$ if and only if $u>\frac{c}{2}$. When $1\leq j\leq d-2,$ $X_j=F^{-1}_{j}(U).$ Moreover,
		\begin{equation*}
			\begin{aligned}
				X_{d-1}=&(1-I)[KJF^{-1}_{d-1}(U)+K(1-J)F^{-1}_{d-1}(c-U)+(1-K)JF^{-1}_{d-1}(U)\\
				&+(1-K)(1-J)F^{-1}_{d-1}(c-U)]+IF^{-1}_{d-1}(U), \\
				X_d=&(1-I)[KJF^{-1}_d(U)+K(1-J)F^{-1}_d(c-U)+(1-K)JF^{-1}_d(c-U)\\
				&+(1-K)(1-J)F^{-1}_d(U)]+IF^{-1}_d(U).
			\end{aligned}
		\end{equation*}
	The absolute values of $X_i$ are
	\begin{equation*}
		\begin{aligned}
			\abs{X_j}=&(1-I)[KF^{-1}_{j}(U)-(1-K)F^{-1}_{j}(U)]+IF^{-1}_{j}(U) \\
							\end{aligned}
	\end{equation*}	\begin{equation*}
		\begin{aligned}\abs{X_{d-1}}=&(1-I)[KJF^{-1}_{d-1}(U)-K(1-J)F^{-1}_{d-1}(c-U)-(1-K)JF^{-1}_{d-1}(U)\\
			&+(1-K)(1-J)F^{-1}_{d-1}(c-U)]+IF^{-1}_{d-1}(U) \\
			=&(1-I)[KF^{-1}_{d-1}(U)-(1-K)F^{-1}_{d-1}(U)]+IF^{-1}_{d-1}(U), \\
			\abs{X_d}=&(1-I)[KJF^{-1}_d(U)-K(1-J)F^{-1}_d(c-U)+(1-K)JF^{-1}_d(c-U)\\
			&-(1-K)(1-J)F^{-1}_d(U)]+IF^{-1}_d(U) \\
			=&(1-I)[KF^{-1}_{d}(U)-(1-K)F^{-1}_{d}(U)]+IF^{-1}_{d}(U). \\
		\end{aligned}
	\end{equation*}
The above equations for $\abs{X_{d-1}}$ and $\abs{X_d}$ hold because $-F^{-1}_d(c-U)=F^{-1}_d(U)$ if $U \leq c$. Similarly to Theorem \ref{upper bound}, we apply Lemma \ref{max product} to prove this proposition. First, $\lvert X_i\rvert$, $i=1,2,\dots, d$, are comonotonic because they are all increasing functions of $\lvert U-\frac{c}{2}\rvert$ ($\lvert X_i\rvert = F^{-1}_{i}(Z)$ where $Z=\frac{c}{2}+\lvert U-\frac{c}{2}\rvert$). Moreover, it verifies that $\prod_{i=1}^{d}X_i\geq 0$ if $d$ is odd. Hence, $M$ is attained by the random vector $(X_1,X_2,\dots, X_d)$, where $X_i=F_i^{-1}(U_i)$ with $U_i$ in \eqref{strong positive}. 
		\item The proof of (2) is similar to that of (1) and thus omitted.\hfill$\Box$
	\end{enumerate} 

Figure~\ref{optimal copulas unif} displays the supports of the copulas in \eqref{strong positive} and \eqref{strong negative} when $d=3$ and $a<0$.

\begin{figure}[H]
	\centering
	\includegraphics[width=0.95\textwidth]{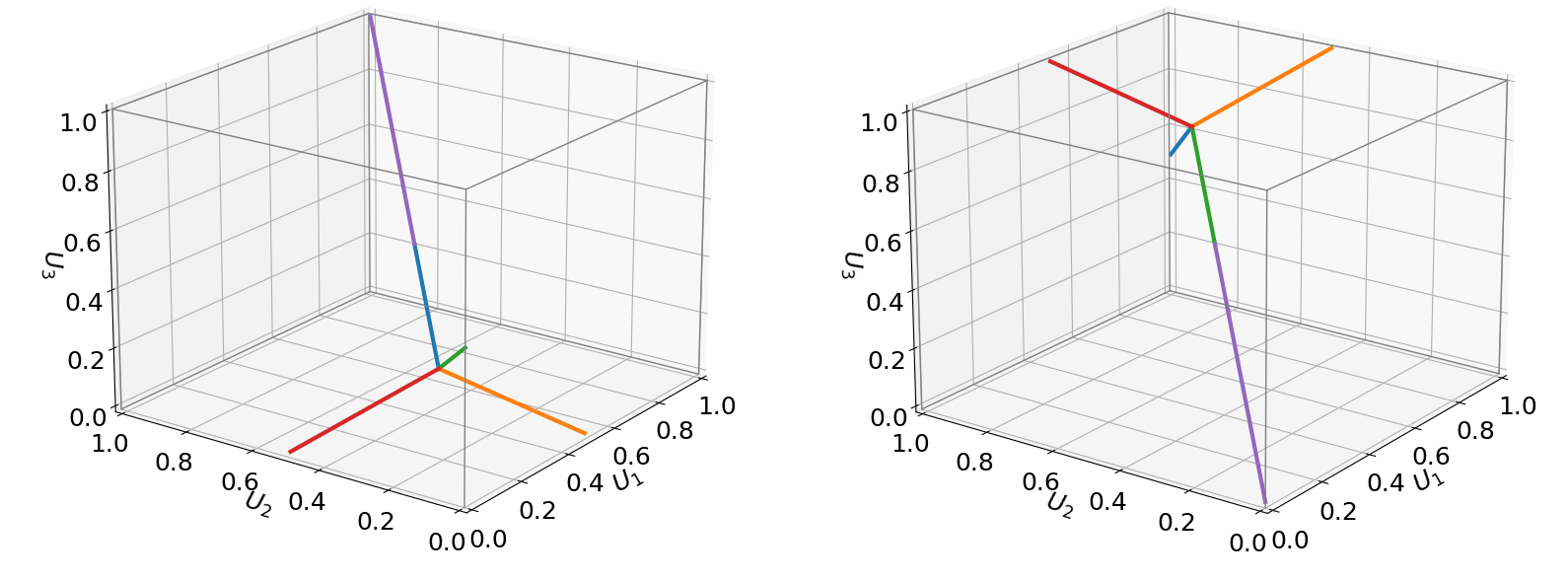}
	\caption{Support of the copula \eqref{strong positive} (resp., \eqref{strong negative}) that maximizes (left panel) (resp., minimizes (right panel)) $\mathbb{E}(X_1X_2X_3)$ with $F_i\sim U[a,-3a]$ (resp., $F_i\sim U[a,-a/3]$), in which $a<0$ and $c=\frac{1}{2}$.
	\label{optimal copulas unif}	}
\end{figure}

%


\subsection{Algorithm for obtaining sharp bounds}\label{Simulated Annealing Algorithm}
In this subsection, we develop an algorithm to approximate for any given choice of $F_i,$ $i=1,2,\dots, d$, the sharp bounds $m$ and $M$. The algorithm is based on the following lemma that establishes necessary conditions that the solutions to the optimization problems \eqref{M} resp.\ \eqref{m} need to satisfy. 
\begin{lemma}\label{basis of bra}
	If $(X_1,X_2,\dots, X_d)$ solves problem \eqref{M} $($resp., \eqref{m}$)$, then for any choice of subsets $I$ of $\{1, 2, \dots, d\}$, it holds that $X_1=\prod_{i\in I}X_i$ and $X_2=\prod_{i\notin I}X_i$ are comonotonic $($resp., antimonotonic$)$.
\end{lemma}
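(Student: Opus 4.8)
The plan is to fix an arbitrary subset $I\subseteq\{1,2,\dots,d\}$, write $Y=\prod_{i\in I}X_i$ and $Z=\prod_{i\notin I}X_i$, and reduce the $d$-dimensional claim to the classical two-dimensional case recalled in Section~\ref{problem setting}. Since $\mathbb{E}(X_1X_2\cdots X_d)=\mathbb{E}(YZ)$, the objective depends on the joint law only through the pair $(Y,Z)$ once the internal coupling of the block $(X_i)_{i\in I}$ and of the block $(X_i)_{i\notin I}$ has been fixed. I would keep these two internal couplings frozen, with internal laws $H_I$ and $H_{I^c}$: this freezes the marginal laws $G_Y$ and $G_Z$ of $Y$ and $Z$, and, crucially, keeps every $X_i\sim F_i$ regardless of how the two blocks are coupled to one another. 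The trivial choices $I=\emptyset$ and $I=\{1,\dots,d\}$ hold automatically, since a constant is both comonotonic and antimonotonic with any variable, so I focus on proper nonempty $I$.

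The core is a re-coupling step. Suppose $(X_1,\dots,X_d)$ solves \eqref{M} but $(Y,Z)$ is not comonotonic. By the two-dimensional Fr\'echet--Hoeffding result, $\mathbb{E}(YZ)$ is strictly increased when $Y$ and $Z$ are made comonotonic while retaining their marginals $G_Y,G_Z$. I would realize such a comonotonic coupling without disturbing any $F_i$ as follows: let $U\sim U[0,1]$, set $(Y,Z)=(G_Y^{-1}(U),G_Z^{-1}(U))$, and then, conditionally on this pair, draw $(X_i)_{i\in I}$ from the regular conditional distribution of $(X_i)_{i\in I}$ given $Y$ under $H_I$ and, independently, draw $(X_i)_{i\notin I}$ from the regular conditional distribution given $Z$ under $H_{I^c}$. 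By the disintegration (tower) property the two blocks retain their laws $H_I$ and $H_{I^c}$, so every $X_i$ keeps its distribution $F_i$, while $Y$ and $Z$ are now comonotonic. This yields an admissible competitor with a strictly larger objective, contradicting optimality; hence $(Y,Z)$ must be comonotonic. For \eqref{m} the same construction with $(Y,Z)=(G_Y^{-1}(U),G_Z^{-1}(1-U))$ forces antimonotonicity.

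I expect the main obstacle to be the verification that this conditional re-coupling changes only the copula linking the two blocks: it must leave $G_Y,G_Z$ and the within-block laws $H_I,H_{I^c}$ (hence all the $F_i$) intact, which is precisely what makes the competitor admissible and the Fr\'echet--Hoeffding comparison legitimate. This requires a careful disintegration argument, together with a standard auxiliary randomization to define the comonotone (resp.\ antimonotone) coupling of $(Y,Z)$ when $G_Y$ or $G_Z$ possess atoms. A secondary point is the equality case: I would invoke the characterization that equality in the covariance Fr\'echet--Hoeffding bound forces comonotonicity (resp.\ antimonotonicity), which upgrades ``cannot be strictly improved'' into the stated structural conclusion.
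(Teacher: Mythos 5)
The paper states Lemma~\ref{basis of bra} \emph{without} proof, so there is no in-paper argument to compare against; your proposal supplies the missing proof, and it is correct. The re-coupling idea --- freeze the internal law of each block, represent the block products as $Y$ and $Z$, and re-glue the two blocks through regular conditional distributions so that $(Y,Z)$ becomes comonotonic (resp.\ antimonotonic) while every $X_i$ keeps its law $F_i$ --- is exactly the standard improvement argument underlying rearrangement-type algorithms, and the disintegration step you flag as the main obstacle does go through, since regular conditional distributions exist on $\mathbb{R}^{\lvert I\rvert}$; moreover, conditioning the block $(X_i)_{i\in I}$ on $Y=\tilde y$ forces $\prod_{i\in I}\tilde X_i=\tilde y$ a.s., so the re-coupled vector really has block products $(\tilde Y,\tilde Z)$ and objective value $\mathbb{E}(\tilde Y\tilde Z)$, as your argument requires.

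Two refinements. First, the strictness step is cleanest via Hoeffding's covariance identity: for fixed marginals $G_Y,G_Z$, the difference between the comonotonic value and $\mathbb{E}(YZ)$ equals $\int\!\!\int\bigl(\min(G_Y(y),G_Z(z))-H(y,z)\bigr)\,dy\,dz\geq 0$, where $H$ is the joint df of $(Y,Z)$, with equality if and only if $H=\min(G_Y,G_Z)$, i.e.\ if and only if $(Y,Z)$ is comonotonic; this simultaneously gives the strict improvement and the equality characterization you invoke, and no auxiliary randomization for atoms is needed because you build the comonotone pair directly as $(G_Y^{-1}(U),G_Z^{-1}(U))$. Second, the argument (like the lemma itself) tacitly requires that $Y$, $Z$, $YZ$ and the comonotonic product be integrable so that Hoeffding's identity applies; the paper's standing square-integrability assumption on the $X_i$ guarantees this only for $d=2$, so strictly speaking both you and the authors are assuming the relevant expectations exist. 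Neither point undermines the proof; the proposal is sound.
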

Making use of Lemma~\ref{basis of bra}, we can now design an algorithm to obtain approximate solutions to problems \eqref{m} and \eqref{M}.

\begin{algorithm} \label{BRA algorithm}
	\begin{enumerate}
		\item[]
		\item Simulate $n$ draws $u_{j}$, $j=1,2,\dots, n$, from a standard uniform distributed random variable.
		\item Initialize $n\times d$ matrix $\bm{X}=(x_1, x_2,\dots, x_d)$ where $x_i=(x_{1i}, x_{2i},\dots, x_{ni})^T$ denotes the $i$-th column ($i=1,2,\dots,d$) and $x_{ji}=F_i^{-1}(u_j)$.
		\item \label{rearrangement} Rearrange two blocks of the matrix $\bm{X}$:
		\begin{enumerate}[3.1.]
			\item Select randomly a subset $I$ of $\{1, 2, \dots, d\}$ of cardinality lower than or equal to $\frac{d}{2}$.
			\item Separate two blocks (submatrices) $\bm{X_1}$ and $\bm{X_2}$ from $\bm{X}$ where the first block $\bm{X_1}$ contains columns of $\bm{X}$ having index in $I$ and the second block $\bm{X_2}$ consists of the other columns.
			\item Rearrange (swap) the rows of first block so that the vector $x_1=(\prod\limits_{i\in I}x_{1i}, \prod\limits_{i\in I}x_{2i},\dots, \prod\limits_{i\in I}x_{ni})^T$ is comonotonic (resp., antimonotonic) to $x_2=(\prod\limits_{i\notin I}x_{1i}, \prod\limits_{i\notin I}x_{2i},\dots, \prod\limits_{i\notin I}x_{ni})^T$ in the case of problem \eqref{M} (resp., \eqref{m}). 
			\item Compute $\Lambda=\frac{1}{n}\sum\limits_{j=1}^{n}\left(\prod\limits_{i=1}^{d}x_{ji}\right)$.
		\end{enumerate}
		\item If there is no difference\footnote{ On the one hand, if the dimension $d$ is large and the algorithm converges slowly, the stop criteria we use is that the relative change in the value of $\Lambda$ is less than 0.01\%. On the other hand, for small dimensions (typically when $d$ is less than 30), it is possible to perform steps 3.2, 3.3 and 3.4 for all possible subsets instead of only 50 randomly chosen subsets. The necessary condition from Lemma \ref{basis of bra} is then guaranteed to be satisfied. } in $\Lambda$ after 50 steps of Step \ref{rearrangement}, output the current matrix $X$ and $\Lambda$, otherwise return to step \ref{rearrangement}.
	\end{enumerate}
\end{algorithm}

To illustrate the empirical performance of the algorithm, we compare in case $F_i\sim U[0,1]$ $(i=1,2,\dots,d)$ the analytic result of \cite{wang2011complete} for the lower bound $m$ with the numerical value obtained by applying the algorithm. In Table~\ref{compare risk bounds}, we report the cases $d=3,  5, 10,  50$ and $n=1000,10000,100000$. We observe that the approximate value is not significantly different   

\begin{table}[H]
	\centering 
	\def\arraystretch{1.8}
	\begin{tabular}{|c|c|c|c|c|}
		\hline
		d&Analytic value & $n=1000$ & $n=10000$ & $n=100000$ \\
		\hline
		3&$5.4803\times 10^{-2}$ & \makecell{$5.4869\times 10^{-2}$ \\ ($1.6\times 10^{-4}$, 0.01s)}& \makecell{$5.4869\times 10^{-2}$ \\ ($5.0\times 10^{-6}$, 0.06s)} &\makecell{$5.4796\times 10^{-2}$ \\ ($1.6\times 10^{-6}$, 0.75s)}\\
		\hline
		5&$6.8604\times 10^{-3}$&\makecell{$6.9259\times 10^{-3}$ \\ ($3.5\times 10^{-5}$, 0.01s)}&\makecell{$6.8844\times 10^{-3}$ \\ ($1.1\times 10^{-5}$, 0.08s)}&\makecell{$6.8616\times 10^{-3}$ \\ ($3.3\times 10^{-6}$, 1.13s)}\\
		\hline
		10&$4.5410\times 10^{-5}$&\makecell{$4.8185\times 10^{-5}$ \\ ($4.9\times 10^{-7}$, 0.01s)}&\makecell{$4.5924   \times 10^{-5}$ \\ ($1.4\times 10^{-7}$, 0.15s)} & \makecell{$4.5372\times 10^{-5}$ \\ ($4.4\times 10^{-8}$, 1.89s)}\\
		\hline
		50&$1.9287\times 10^{-22}$&\makecell{$6.2708\times 10^{-22}$ \\ ($4.5\times 10^{-23}$, 0.02s)}&\makecell{$2.2119\times 10^{-22}$ \\ ($3.7\times 10^{-24}$, 0.34s)} & \makecell{$1.9654\times 10^{-22}$ \\ ($9.56\times 10^{-25}$, 8.73s)}\\
		\hline
	\end{tabular}
	\caption{Let $F_i\sim U[0,1]$ for $i=1,2,\dots,d$. We compare the analytic value for $m$ from \cite{wang2011complete} with the numerical value obtained using Algorithm \ref{BRA algorithm} (mean across 1000 experiments) for $n=1000,10000,100000$. The numbers between parentheses represent the standard errors and average time consumption.}
\label{compare risk bounds}
\end{table}
\noindent from the analytic value (especially when $n$ is big). The run time increases if $d$ and $n$ increase. The standard errors illustrate that the algorithm we use is relatively stable. To summarize, our proposed  algorithm appears to be a simple, fast and stable method to numerically solve problems \eqref{m} and \eqref{M}.

\section{Application to coskewness uncertainty} \label{uniform margins}

In this section, we apply the results obtained so far to the study of risk bounds on coskewness among random variables $X_i$ with given marginal distributions $F_i$ ($i=1,2,3$) but unknown dependence.  
To begin with, the coskewness of $X_1$, $X_2$ and $X_3$, denoted by $S(X_1, X_2, X_3)$, is given as
	\begin{equation*} 
		S(X_1, X_2, X_3)=\frac{\mathbb{E}((X_1-\mu_{1})(X_2-\mu_{2})(X_3-\mu_{3}))}{\sigma_{1}\sigma_{2}\sigma_{3}},
		\label{equation for coskewness} 
	\end{equation*}
and we thus aim at solving the following problems
\begin{equation}\label{min cosk}
	\underline{S}=\inf \limits_{X_i\sim F_i,\ i=1,2,3}S(X_1,X_2,X_3),
\end{equation}
\begin{equation}\label{max cosk}
	\overline{S}=\sup \limits_{X_i\sim F_i,\ i=1,2,3}S(X_1,X_2,X_3).
\end{equation}
Note that $X_i\sim F_i\iff Y_i=\frac{X_i-\mu_i}{\sigma_i}\sim H_i$ where $H_i^{-1}=\frac{F_i^{-1}-\mu_i}{\sigma_i}$. Hence, solving Problems~\eqref{min cosk} and \eqref{max cosk} under the restriction $X_i\sim F_i$ ($i=1,2,3$) is equivalent to solving the optimizations problems \eqref{m} and \eqref{M} for the case $X_i\sim H_i$. That is, standardization of the marginal distributions $F_i$, $i=1,2,3$, does not affect the bounds.

\subsection{Risk bounds on coskewness}

The following proposition follows as a direct application of Theorem \ref{upper bound} resp.\ Theorem \ref{lower bound}. 

\begin{proposition} \label{symmetric margins and any copula}
	Let $X_i\sim F_i$ in which the $F_{i}$ are symmetric, $i=1,2,3$, and $U\sim U[0,1]$. The maximum coskewness $\overline{S}$ of $X_1$, $X_2$ and $X_3$ under dependence uncertainty is given as
	\begin{equation} \label{maximum coskewness}
		\overline{S}=\mathbb{E}\left(G^{-1}_{1}(U)G^{-1}_{2}(U)G^{-1}_{3}(U)\right)
	\end{equation} where $G_i$ is the df of $|(X_i-\mu_i)/\sigma_i|$ and is attained when $X_i=F_i^{-1}(U_i)$ with $U_i$ as in \eqref{max copula for odd dimension}; the minimum coskewness $\underline{S}$ is given as 
	\begin{equation} \label{minimum coskewness}
		\underline{S}=-\mathbb{E}\left(G^{-1}_{1}(U)G^{-1}_{2}(U)G^{-1}_{3}(U)\right)
	\end{equation} and is attained when $X_i=F_i^{-1}(U_i)$ with $U_i$ as in \eqref{min copula for odd dimension}. 
\end{proposition}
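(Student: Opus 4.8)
The plan is to reduce the two coskewness problems directly to the product-moment bounds of Theorem~\ref{upper bound} and Theorem~\ref{lower bound}, exploiting that the denominator $\sigma_1\sigma_2\sigma_3$ in the definition of $S$ depends only on the marginals and is therefore unaffected by the dependence structure. First I would observe that, since $\sigma_1\sigma_2\sigma_3>0$ is a fixed constant across all couplings, maximizing (resp.\ minimizing) $S(X_1,X_2,X_3)$ over all joint laws with $X_i\sim F_i$ is equivalent to maximizing (resp.\ minimizing) the numerator $\mathbb{E}\big((X_1-\mu_1)(X_2-\mu_2)(X_3-\mu_3)\big)$, which equals $\sigma_1\sigma_2\sigma_3\,\mathbb{E}(Y_1Y_2Y_3)$ for the standardized variables $Y_i=(X_i-\mu_i)/\sigma_i\sim H_i$. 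Thus $\overline{S}=\sup\mathbb{E}(Y_1Y_2Y_3)$ and $\underline{S}=\inf\mathbb{E}(Y_1Y_2Y_3)$; that is, the two coskewness problems coincide with Problems~\eqref{M} and \eqref{m} for the margins $H_i$, exactly as noted in the paragraph preceding the proposition.

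Next I would verify that the $H_i$ satisfy the hypotheses of Theorems~\ref{upper bound} and \ref{lower bound}. Since $F_i$ is symmetric, its center of symmetry is the mean $\mu_i$, so $X_i-\mu_i$ is symmetric about $0$; dividing by $\sigma_i>0$ preserves symmetry and yields $\mathbb{E}(Y_i)=0$. Hence each $H_i$ is symmetric with zero mean, and here $d=3$ is odd, so both theorems apply with no modification.

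With the hypotheses in place, Theorem~\ref{upper bound} gives $\sup\mathbb{E}(Y_1Y_2Y_3)=\mathbb{E}\big(G_1^{-1}(U)G_2^{-1}(U)G_3^{-1}(U)\big)$, where $G_i$ is the df of $|Y_i|=|(X_i-\mu_i)/\sigma_i|$, and the supremum is attained by $Y_i=H_i^{-1}(U_i)$ with the $U_i$ as in \eqref{max copula for odd dimension}. This establishes \eqref{maximum coskewness}. To re-express the optimizer in terms of the original variables, I would invoke the identity $H_i^{-1}=(F_i^{-1}-\mu_i)/\sigma_i$, which shows that $Y_i=H_i^{-1}(U_i)$ is equivalent to $X_i=F_i^{-1}(U_i)$; hence the maximizing dependence for the $X_i$ is realized by the same copula $U_i$ from \eqref{max copula for odd dimension}. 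The identical argument applied to Theorem~\ref{lower bound} with \eqref{min copula for odd dimension} produces \eqref{minimum coskewness} and its optimizer.

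I do not anticipate a genuine obstacle, as the statement is a specialization of the product-moment theorems already established; the only points requiring care are the bookkeeping of the standardization—confirming that the center of symmetry of $F_i$ is $\mu_i$ so that the $H_i$ inherit symmetry and zero mean, and checking that the optimal copula in the $U_i$-parametrization transfers unchanged from $Y_i$ to $X_i$ via $H_i^{-1}=(F_i^{-1}-\mu_i)/\sigma_i$.
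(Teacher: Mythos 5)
Your proposal is correct and matches the paper's own treatment: the paper presents this proposition as a direct application of Theorems~\ref{upper bound} and \ref{lower bound} after the standardization $Y_i=(X_i-\mu_i)/\sigma_i\sim H_i$ noted just before the proposition. Your added care in checking that symmetry of $F_i$ implies the $H_i$ are symmetric with zero mean, and that the optimal copula transfers via $H_i^{-1}=(F_i^{-1}-\mu_i)/\sigma_i$, simply makes explicit the steps the paper leaves implicit.
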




Thanks to Proposition~\ref{symmetric margins and any copula}, we can compute the risk bounds on coskewness for different choices of symmetric marginal distributions. 

\textbf{Uniform marginal distributions:} Let  $F_i\sim U[a_i,b_i]$, $i=1,2,3$. Standardization of the $F_i$ leads to marginal distributions $H_i \sim U\left[-\sqrt{3},\sqrt{3}\right]$. Hence, an application of Corollary~\ref{uniform margins in d dimensions} to the case $d=3$ yields that
 $\overline{S}=\frac{3\sqrt{3}}{4}$ and $\underline{S}=-\frac{3\sqrt{3}}{4}$.

\textbf{Normal marginal distributions:} Let  $F_i\sim N(\mu_i,\sigma^2_i)$, $i=1,2,3$. After standardization we find that 
$\overline{S}=\mathbb{E}\left(G^{-1}(U)^3\right)=2\mathbb{E}\left(Z^3\mathds{1}_{Z>0}\right)$ where $G$ is the df of $\abs{Z}$ with $Z\sim N(0, 1)$. Integration yields that
$\overline{S}=\frac{2}{\sqrt{2\pi}}\int_{0}^{+\infty}z^3e^{-\frac{z^2}{2}}dz=\frac{2\sqrt{2\pi}}{\pi}.
$

Similar calculations can also be performed for other symmetric marginal distributions. In Table \ref{general risk bounds}, we report risk bounds on coskewness according to Proposition~\ref{symmetric margins and any copula} for various cases. Note that except for the parameter $\nu$, all parameters in the table have no impact on the bounds because they are location and scale parameters.

\begin{table}[!htbp] 
	\centering 
	\renewcommand\arraystretch{1.5}
	\begin{tabular}{|c|c|c|}
		\hline
		Marginal Distributions $F_i$&Minimum Coskewness&Maximum Coskewness\\
		\hline
		$N(\mu_i, \sigma_i^2)$&$-\frac{2\sqrt{2\pi}}{\pi}$&$\frac{2\sqrt{2\pi}}{\pi}$\\
		\hline
		$Student(\nu),\ \nu>3$&$-\frac{4(\nu-2)\sqrt{(\nu-2)\pi}\Gamma(\frac{\nu+1}{2})}{(3-4\nu+\nu^2)\pi\Gamma(\frac{\nu}{2})}$&$\frac{4(\nu-2)\sqrt{(\nu-2)\pi}\Gamma(\frac{\nu+1}{2})}{(3-4\nu+\nu^2)\pi\Gamma(\frac{\nu}{2})}$\\
		\hline
		$Laplace(\mu_i, b_i)$&$-\frac{3\sqrt{2}}{2}$&$\frac{3\sqrt{2}}{2}$\\
		\hline
		$U[a_i, b_i]$&$-\frac{3\sqrt{3}}{4}$&$\frac{3\sqrt{3}}{4}$\\
		\hline
	\end{tabular}
	\caption{Maximum and minimum coskewness for various choices of the marginal distributions. $\Gamma(x)$ denotes the gamma function.}
\label{general risk bounds}

\end{table}

\begin{proposition}
	When $F_i$, $i=1,2,3$, are symmetric, then $\overline{S}$ and $\underline{S}$ 
	are opposite numbers.
\end{proposition}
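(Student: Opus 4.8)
The plan is to observe that this statement is essentially an immediate consequence of Proposition~\ref{symmetric margins and any copula}. That proposition already supplies, under the symmetry assumption, the explicit expressions $\overline{S}=\mathbb{E}(G_1^{-1}(U)G_2^{-1}(U)G_3^{-1}(U))$ and $\underline{S}=-\mathbb{E}(G_1^{-1}(U)G_2^{-1}(U)G_3^{-1}(U))$, so reading off $\overline{S}=-\underline{S}$ requires no further computation. I would record this one-line deduction first.

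For a more self-contained argument that does not invoke the precise form of the optimizers, I would instead exploit a reflection symmetry. The key step is that, because $F_3$ is symmetric about its mean $\mu_3$, the reflected variable $\tilde{X}_3=2\mu_3-X_3$ again has distribution $F_3$. Hence, given any feasible coupling $(X_1,X_2,X_3)$ with the prescribed marginals, the coupling $(X_1,X_2,\tilde{X}_3)$ is also feasible. Since $\tilde{X}_3-\mu_3=-(X_3-\mu_3)$, the central mixed moment satisfies
\begin{equation*}
\mathbb{E}\big((X_1-\mu_1)(X_2-\mu_2)(\tilde{X}_3-\mu_3)\big)=-\mathbb{E}\big((X_1-\mu_1)(X_2-\mu_2)(X_3-\mu_3)\big),
\end{equation*}
so the associated coskewness flips sign while the normalizing constant $\sigma_1\sigma_2\sigma_3$ is unchanged.

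This reflection therefore sets up a bijection between feasible couplings achieving coskewness $s$ and feasible couplings achieving coskewness $-s$. Consequently the set of attainable coskewness values is symmetric about zero, and taking the supremum and infimum over this set yields $\overline{S}=-\underline{S}$, which is exactly the claim.

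I do not anticipate any genuine obstacle: the whole argument rests on the single fact that symmetry of $F_3$ makes reflection about $\mu_3$ a marginal-preserving operation, together with the observation that the sign of a product of three factors flips when exactly one factor is negated. The only point worth stating carefully is that this operation is its own inverse and hence maps the feasible set \emph{onto} itself, so that the extremal values are genuinely interchanged rather than merely bounded by one another.
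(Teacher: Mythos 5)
Your proposal is correct, and it actually contains two arguments. The first paragraph is precisely the paper's own proof: the authors omit the argument, stating that the claim is an immediate consequence of Theorems \ref{upper bound} and \ref{lower bound}, which is exactly your reading-off of $\overline{S}=-\underline{S}$ from the explicit expressions in Proposition \ref{symmetric margins and any copula}. Your reflection argument, however, is a genuinely different route and is worth highlighting: it never uses the explicit extremal couplings (the cross product copulas), only the fact that $\tilde{X}_3=2\mu_3-X_3\sim F_3$ when $F_3$ is symmetric, so that negating one centered factor is a marginal-preserving involution on the feasible set that flips the sign of coskewness. This buys two things the paper's argument does not: (i) it requires symmetry of only \emph{one} of the three marginals, whereas the paper's route needs all three $F_i$ symmetric in order to invoke Theorems \ref{upper bound} and \ref{lower bound}; and (ii) it is insensitive to whether the supremum and infimum are attained, since symmetry of the attainable set $A=-A$ alone gives $\sup A=-\inf A$. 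The trade-off is that the paper's approach delivers the common value $\mathbb{E}\left(G_1^{-1}(U)G_2^{-1}(U)G_3^{-1}(U)\right)$ and the optimizers explicitly, while your reflection argument establishes only the sign-symmetry of the bounds.
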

\begin{proof}
We omit the proof since it is an immediate consequence of Theorems \ref{upper bound} and \ref{lower bound}.
\end{proof}

Based on these new bounds, we define hereafter a novel concept of standardized rank coskewness.

\subsection{Standardized rank coskewness}\label{sMsection standardized rank coskewness}
An important feature of the coskewness is that it depends on marginal distributions. 
In the same spirit as \cite{spearman1961proof} for the rank correlation, we propose to define the standardized rank coskewness among given variables $X_1\sim F_1$, $X_2 \sim F_2$ and $X_3 \sim F_3$ as the coskewness of the transformed variables $F_1(X_1)$, $F_2(X_2)$, and $F_3(X_3)$.
\begin{definition}[Standardized rank coskewness]\label{standardized rank coskewness}
	Let $X_i\sim F_{i}$, $i=1,2,3$, such that $F_i$ are strictly increasing and continuous. The standardized rank coskewness of $X_1$, $X_2$ and $X_3$ denoted by $RS(X_1,X_2,X_3)$ is defined as $RS(X_1,X_2,X_3)=\frac{4\sqrt{3}}{9}S(F_{1}(X_1), F_{2}(X_2), F_{3}(X_3))$. Hence,
	\begin{equation} 
		RS(X_1,X_2,X_3)=32\mathbb{E}\left(\left(F_{1}(X_1)-\frac{1}{2}\right)\left(F_{2}(X_2)-\frac{1}{2}\right)\left(F_{3}(X_3)-\frac{1}{2}\right)\right).
		\label{equation for rank coskewness} 
	\end{equation}
\end{definition}
\begin{proposition}\label{src proposition}
	Let $X_i\sim F_i$ for $i=1,2,3$. The standardized rank coskewness $RS(X_1,X_2,X_3)$ satisfies the following properties:
	\begin{enumerate}[(1)]
		\item $-1\leq RS(X_1,X_2,X_3)\leq 1$.
		\item The upper bound of $1$ 
		is obtained when the $X_i$ are of the form $X_i=F_i^{-1}(U_i)$ in which the $U_i$ are given in \eqref{max copula for odd dimension}. The lower bound of $-1$ is obtained when the $X_i$ are of the form $X_i=F_i^{-1}(U_i)$ in which the $U_i$ are given in \eqref{min copula for odd dimension}.
		\item It is invariant under strictly increasing transformations, i.e.,  when $f_i$, $i=1,2,3$, are arbitrary strictly increasing functions, we have $RS(X_1,X_2,X_3)=RS(f_1(X_1),f_2(X_2),f_3(X_3)).$
		\item $RS(X_1,X_2,X_3)=0$ if $X_1$, $X_2$ and $X_3$ are independent.
	\end{enumerate}
\end{proposition}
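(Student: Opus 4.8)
The plan is to reduce everything to the coskewness of three standard uniforms and then invoke the symmetric-margin bounds already established. Since each $F_i$ is strictly increasing and continuous, the probability integral transform gives $F_i(X_i)\sim U[0,1]$, so that $V_i := F_i(X_i)-\tfrac12 \sim U[-\tfrac12,\tfrac12]$ is symmetric about $0$ with zero mean, and $RS(X_1,X_2,X_3)=32\,\mathbb{E}(V_1V_2V_3)$ by the defining formula \eqref{equation for rank coskewness}. The three statements about bounds, invariance and independence will each follow from a short argument built on this single reduction.

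For parts (1) and (2) I would apply Theorem \ref{upper bound} and Theorem \ref{lower bound} to the symmetric zero-mean laws of the $V_i$ with $d=3$ (odd). Since $|V_i|\sim U[0,\tfrac12]$, the df $G_i$ of $|V_i|$ has quantile $G_i^{-1}(u)=u/2$, so Theorem \ref{upper bound} gives $\sup\mathbb{E}(V_1V_2V_3)=\mathbb{E}\big((U/2)^3\big)=\tfrac1{32}$ and Theorem \ref{lower bound} gives the infimum $-\tfrac1{32}$. Multiplying by $32$ yields $-1\le RS\le 1$, establishing part (1). For attainment in part (2), the maximizer in Theorem \ref{upper bound} is $V_i=H_i^{-1}(U_i)$ with $H_i$ the df of $U[-\tfrac12,\tfrac12]$ and the $U_i$ as in \eqref{max copula for odd dimension}; since $H_i^{-1}(u)=u-\tfrac12$, this is equivalent to $F_i(X_i)=U_i$, i.e. $X_i=F_i^{-1}(U_i)$, which is precisely the stated extremal vector, and symmetrically for the lower bound via \eqref{min copula for odd dimension}. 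Equivalently, one can read the value off the uniform case obtained from Corollary \ref{uniform margins in d dimensions} together with the scale- and location-invariance of coskewness, noting that $\tfrac{4\sqrt3}{9}\cdot\tfrac{3\sqrt3}{4}=1$.

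For part (3), let $f_i$ be strictly increasing, set $Y_i=f_i(X_i)$, and let $G_i$ denote the df of $Y_i$. Strict monotonicity gives $G_i(f_i(x))=\mathbb{P}(f_i(X_i)\le f_i(x))=\mathbb{P}(X_i\le x)=F_i(x)$, hence $G_i(Y_i)=F_i(X_i)$ almost surely; substituting into \eqref{equation for rank coskewness} shows $RS(Y_1,Y_2,Y_3)=RS(X_1,X_2,X_3)$. For part (4), independence of $X_1,X_2,X_3$ makes the three centred factors in \eqref{equation for rank coskewness} independent, so the expectation factorises; each factor equals $\mathbb{E}(F_i(X_i))-\tfrac12=0$ because $F_i(X_i)\sim U[0,1]$, giving $RS=0$.

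I expect the only delicate point to be the bookkeeping in part (2): verifying that the extremal vector $V_i=H_i^{-1}(U_i)$ for the centred uniforms pulls back exactly to $X_i=F_i^{-1}(U_i)$, and checking that the normalising constant $\tfrac{4\sqrt3}{9}$ (equivalently the factor $32$) is precisely what sends the extremal value $\pm\tfrac1{32}$ to $\pm1$. Everything else is a direct consequence of the symmetric-margin theorems and the elementary properties of the probability integral transform.
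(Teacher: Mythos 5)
Your proof is correct and follows exactly the route the paper intends: the paper states this proposition without an explicit proof, relying implicitly on the reduction $RS(X_1,X_2,X_3)=32\,\mathbb{E}\bigl(\prod_{i}(F_i(X_i)-\tfrac12)\bigr)$ and on Theorems \ref{upper bound} and \ref{lower bound} (equivalently Corollary \ref{uniform margins in d dimensions} with $d=3$, where $\tfrac{4\sqrt3}{9}\cdot\tfrac{3\sqrt3}{4}=1$), which is precisely your argument. Your verification of attainment via $H_i^{-1}(U_i)=U_i-\tfrac12$ pulling back to $X_i=F_i^{-1}(U_i)$, and the probability-integral-transform arguments for parts (3) and (4), fill in the details the paper leaves to the reader.
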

Note that $X_i \sim F_i$ ($i=1,2,3$) exhibit maximum resp.\ minimum standardized rank coskewness when they have a cross product copula specified through \eqref{max copula for odd dimension} resp.\ \eqref{min copula for odd dimension}. Specifically, the properties in (1)-(4) are a strong motivation for the introduction of the newly introduced notion of standardized rank coskewness. One shortcoming of the new definition like the traditional coskewness is that the last property in Proposition~\ref{src proposition} is sufficient but not necessary. 

\subsection{Asymmetric marginals}\label{application to broad case}
When the $F_i$ are not symmetric, one can still obtain explicit bounds on coskewness providing the $F_i$ satisfy some domain conditions; see Propositions \ref{non-neg}-\ref{prop31}. In the general case, one can invoke Algorithm~\ref{BRA algorithm} to obtain approximations for the sharp bounds. We examine hereafter the example of lognormal distributions, i.e., $F_i\sim\log N(0,1)$ for $i=1,2,3$. 
\begin{figure}[H]
	\centering
	\includegraphics[width=0.9\textwidth]{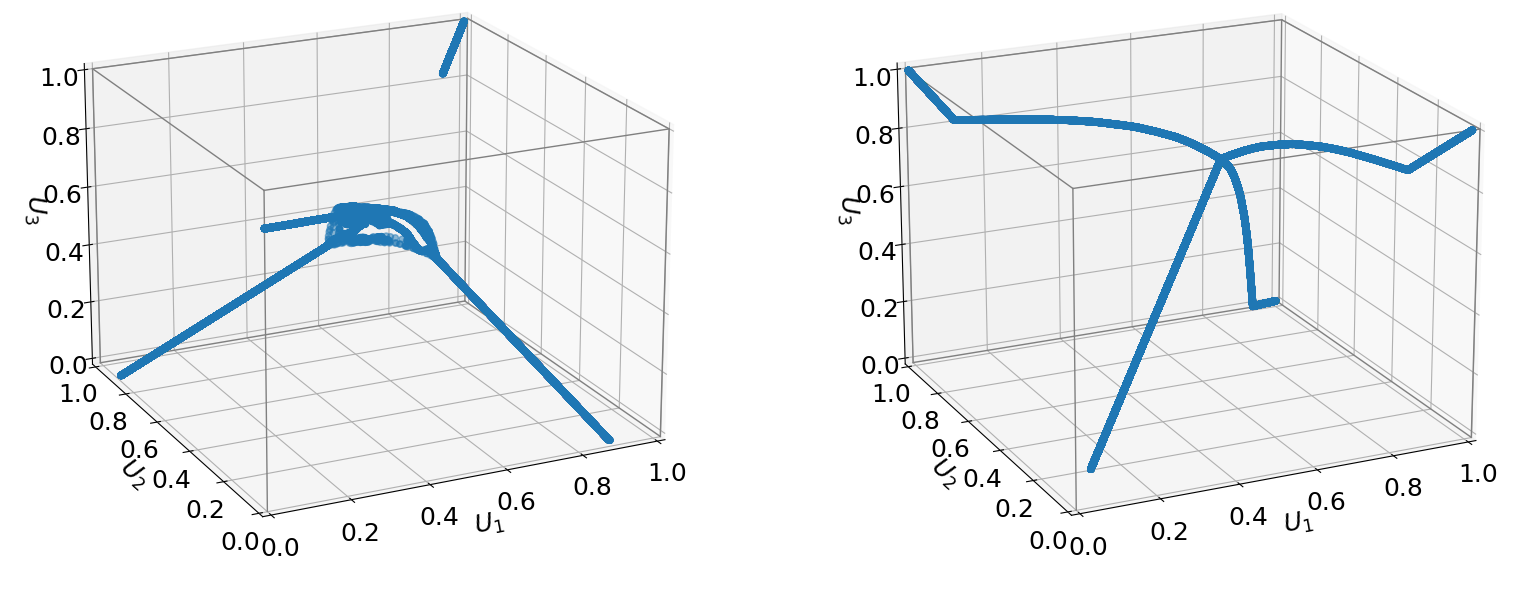}
	\caption{With $F_i\sim\log N(0,1)$ ($i=1,2,3$) and $n=100000$, the support of the copula that maximizes (resp.\ minimizes) coskewness is displayed in the left (resp.\ the right) panel. In this case, $M\approx 5.71$ and $m\approx -0.97$.}
	\label{lognorm margins}
\end{figure}

From Algorithm~\ref{BRA algorithm}, we obtain that maximum and minimum coskewness are approximately equal to $5.71$ resp.\ $-0.97$ when $n=100000$. The supports of the corresponding copulas are displayed in Figure~\ref{lognorm margins}. Note that using the copulas coming from \eqref{max copula for odd dimension} resp.\ \eqref{min copula for odd dimension} would only lead to a coskewness equal to $4.79$ resp.\ $0.21$.

\section{Conclusion} \label{conclusion}
In this paper, we find new bounds for the expectation of a product of random variables when marginal distribution functions are fixed but dependence is unknown. We solve this problem explicitly under some conditions on the marginal distributions and propose an algorithm to solve the problem in the general case. We introduce the novel notion of  standardized rank coskewness, which unlike coskewness,  is unaffected by marginal distributions and thus  appears useful for better understanding the degree of coskewness that exists among three random variables. 
\begin{doublespace}
\footnotesize
\paragraph{Acknowledgments:}
The authors thank two reviewers for their valuable hints and remarks which helped to improve the paper. The authors gratefully acknowledge funding from Fonds Wetenschappelijk Onderzoek (grants FWOAL942 and FWOSB73). Jinghui Chen would like to thank Xin Liu for his comments on the paper. 

\bibliography{ARXIV}
\end{doublespace}

\end{document}